\numberwithin{equation}{section}
\font\tencyr=wncyr10 
\font\tencyi=wncyi10 
\font\tencysc=wncysc10 
\def\rus{\tencyr\cyracc}
\def\rusi{\tencyi\cyracc}
\def\rusc{\tencysc\cyracc}
\newenvironment{proof*}
{\noindent {\sl Proof.}\quad }{\hfill
$\square$}
\renewcommand{\@cite}[2]{[{{\bf #1}\if@tempswa , #2\fi}]}
\renewcommand{\@biblabel}[1]{[{\bf #1}]\hfill}
\newtheorem{thm}{Theorem}[section]
\newtheorem{lm}[thm]{Lemma}
\newtheorem{prop}[thm]{Proposition}
\newtheorem{conj}[thm]{Conjecture}
\theoremstyle{remark}
\newtheorem{rmk}[thm]{Remark}
\theoremstyle{definition}
\newtheorem{df}{Definition}
\newtheorem*{rema}{Remark}
\newenvironment{E6}[6]{%
{\small\begin{tabular}{@{}c@{}}
{#1}--{#2}--\lower3.5ex\vbox{\hbox{{#3}\rule{0ex}{2.5ex}}
\hbox{\hspace{0.4ex}\rule{.1ex}{1ex}\rule{0ex}{1.4ex}}\hbox{{#6}\strut}}--{#4}--{#5}
\end{tabular}}}
\newenvironment{E7}[7]{%
{\small\begin{tabular}{@{}c@{}}
{#1}--{#2}--{#3}--\lower3.5ex\vbox{\hbox{{#4}\rule{0ex}{2.5ex}}
\hbox{\hspace{0.4ex}\rule{.1ex}{1ex}\rule{0ex}{1.4ex}}\hbox{{#7}\strut}}--{#5}--{#6}
\end{tabular}}}
\newenvironment{E8}[8]{%
{\small\begin{tabular}{@{}c@{}}
{#1}--{#2}--{#3}--{#4}--\lower3.5ex\vbox{\hbox{{#5}\rule{0ex}{2.5ex}}
\hbox{\hspace{0.4ex}\rule{.1ex}{1ex}\rule{0ex}{1.4ex}}\hbox{{#8}\strut}}--{#6}--{#7}
\end{tabular}}}
\newcommand {\ah}{{\mathfrak a}}
\newcommand {\g}{{\mathfrak g}}
\newcommand {\h}{{\mathfrak h}}
\newcommand {\el}{{\mathfrak l}}
\newcommand {\es}{{\mathfrak s}}
\newcommand {\te}{{\mathfrak t}}
\newcommand {\ut}{{\mathfrak u}}
\newcommand {\z}{{\mathfrak z}}
\newcommand {\slv}{{\mathfrak {sl}}(\BV)}
\newcommand {\glv}{{\mathfrak {gl}}(\BV)}
\newcommand {\spv}{{\mathfrak {sp}}(\BV)}
\newcommand {\sov}{{\mathfrak {so}}(\BV)}
\newcommand {\esi}{\varepsilon}
\newcommand {\ap}{\alpha}
\newcommand {\lb}{\lambda}
\newcommand {\ca}{{\mathcal A}}
\newcommand {\N}{{\mathcal N}}
\newcommand {\co}{{\mathcal O}}
\newcommand {\BV}{{\mathsf{V}}}
\newcommand {\BZ}{{\mathbb Z}}
\newcommand {\BN}{{\mathbb N}}
\newcommand {\BQ}{{\mathbb Q}}
\newcommand {\ad}{{\mathrm{ad\,}}}
\newcommand {\codim}{{\mathrm{codim\,}}}
\newcommand {\hot}{{\mathsf{ht}}}
\newcommand {\htt}{\tilde{\textsl{ht\,}}}
\newcommand {\ind}{{\mathsf{ind}}}
\newcommand {\Lie}{{\mathrm{Lie\,}}}
\newcommand {\Ker}{{\mathsf{Ker\,}}}
\newcommand {\Ima}{{\mathsf{Im\,}}}
\newcommand {\rk}{{\mathsf{rk}}}
\newcommand {\spe}{{\mathsf{Spec}}}
\newcommand {\tri}{{\mathfrak{sl}}_2}
\newcommand {\sltri}{{\mathfrak{sl}}_3}
\newcommand {\GR}[2]{{\textrm{{\color{blue}\bf #1}}}_{#2}}
\newcommand {\GRt}[2]{{\color{blue}{\widetilde{\textrm{\bf #1}}}   }_{#2}   }
\newcommand {\ov}{\overline}
\newcommand {\beq}{\begin{equation}}
\newcommand {\eeq}{\end{equation}}
\newcommand {\eus}{\EuScript}
\newcommand {\ede}{\eus D(e)}
\newcommand {\edva}{e^{\langle 2\rangle}}
\newcommand {\etdva}{\tilde e^{\langle 2\rangle}}
\newcommand {\ltr}{{\langle 2\rangle}}
\renewcommand{\le}{\leqslant}
\renewcommand{\ge}{\geqslant}
\font\Bbbfont=msbm10 scaled 1200%
\font\Bbbsmallfont=msbm8%
\def\varnothing{\hbox {\Bbbfont\char'077}}
\def\bbk{\hbox {\Bbbfont\char'174}}
\begin{document}
\setlength{\parskip}{2pt plus 4pt minus 0pt}
\hfill {\scriptsize March 30, 2010} 
\vskip1.5ex

\title[Divisible weighted Dynkin diagrams and reachable elements]
{On divisible weighted Dynkin diagrams and reachable elements}
\author[D.\,Panyushev]{Dmitri I. Panyushev}
\address{Independent University of Moscow, 
Bol'shoi Vlasevskii per. 11, 119002 Moscow, Russia
\hfil\break\indent
Institute for Information Transmission Problems, B. Karetnyi per. 19, Moscow 
127994}
\email{panyush@mccme.ru}
\urladdr{\url{http://www.mccme.ru/~panyush}}
\subjclass[2010]{14L30, 17B08, 22E46}
\maketitle

\section*{Introduction}

\noindent
Let $G$ be a connected simple algebraic group with Lie algebra $\g$  
and $e\in \g$ a nilpotent element. 
By the Morozov-Jacobson theorem,
there is an $\tri$-triple containing $e$, say $\{e,h,f\}$.
The semisimple element $h\in\g$ is called a {\it characteristic\/} of $e$.
Let $\eus D(e)$ be the weighted Dyn\-kin diagram of (the $G$-orbit of) $e$.
As is well known, the numbers occurring in this diagram belong to the set
$\{0,1,2\}$ (see Section~\ref{sect:prelim}  for details.). 
Suppose that $e$ is  {\it even}, which means that "1" does not occur in $\ede$. 
Then one may formally divide $\ede$ by 2, i.e., replace all "2" in $\ede$ 
with "1". The resulting diagram, denoted  $\frac{1}{2}\ede$,
still looks like a weighted Dynkin diagram, and we
are interested in the following situation: 
\begin{itemize}
\item[$\framebox{\checkmark}$] \ 
Both  $\ede$ and $\frac{1}{2}\ede$  are weighted Dynkin diagrams; \quad
{\em equivalently,} 
\item[$\framebox{\checkmark}$] \ Both $h$ and $h/2$ are characteristics of 
nilpotent elements.
\end{itemize}

\noindent 
If such a division produces another nilpotent element, then one may expect that the corresponding  orbits have some interesting properties.

\begin{df}
A weighted Dynkin diagram $\ede$ or the corresponding nilpotent $G$-orbit $\co=G{\cdot}e$ 
is said to be {\it divisible\/} if $\frac{1}{2}\ede$ is again a weighted Dynkin diagram. 
For a divisible $\ede$, the pair of orbits corresponding to $\ede$ and $\frac{1}{2}\ede$ 
is called a {\it friendly pair}. 
\end{df}
\noindent
The orbit corresponding to $\frac{1}{2}\ede$ is denoted by  $\co^\ltr$, and we write 
$\edva$ for an element of $\co^\ltr$ with characteristic $h/2$.
Our goal is to classify the friendly pairs of nilpotent orbits 
for all simple Lie algebras and explore some of their properties.  
Write  $\g^x$ for the centraliser of $x\in\g$.

In Section~\ref{sect:gen-prop}, 
we prove that $\dim \g^{\edva}=\dim\Ker(\ad e)^2=\dim\g^e+\dim \g^e_{nil}$, where
$\g^e_{nil}$ is the nilpotent radical of $\g^e$; we also note that  if $e$ is divisible, then the 
{\it Dynkin index\/} of the simple 3-dimensional subalgebra $\textsf{span}\{e,h,f\}$ is 
divisible by 4. For the classical Lie algebras,
we characterise the partitions corresponding to the divisible orbits (Theorem~\ref{sl+sp+so})
and provide an explicit construction of $\edva$ via the  Jordan normal form of $e$. 
For instance, if $\g=\slv$ or $\spv$, then the partition 
$(\lb_1,\lb_2,\dots)$ of $\dim\BV$ gives rise
to a divisible orbit if and only if all $\lb_i$ are odd. Furthermore, if 
$SL(\BV){\cdot}e\subset \slv$
is divisible, then one can take $\edva=e^2$, which explains our notation.
For the exceptional Lie algebras, we merely provide a list of friendly pairs 
(Table~\ref{table_E}). 
Let $\el$ be a minimal  Levi subalgebra of $\g$ meeting $G{\cdot}e$.
Using our classification, we prove that  $L{\cdot}e$ is divisible (in $\el$) if and only if 
$G{\cdot}e$ is divisible (Theorem~\ref{thm:divis-Levi}).

For a divisible $\co=G{\cdot}e$, we  assume that $[h,\edva]=4\edva$.
The pair $(\co,\co^{\ltr})$ is said to be {\it very friendly}, if $\edva$ can additionally 
be chosen such that  $[e,\edva]=0$. In Section~\ref{sect:very},
we prove that all friendly pairs in the classical algebras are very friendly, whereas for the exceptional algebras there is only one exception (for $\g$ of type $\GR{F}{4}$).

The two nonzero nilpotent orbits in $\sltri$ represent the simplest example
of a friendly pair. Motivated by this observation,
we say that two nilpotent orbits $\tilde\co,\co \subset \g$ form an $\GR{A}{2}$-{\it pair},
if there is a subalgebra $\sltri\subset \g$ such that 
$\tilde\co\cap\sltri$ (resp. $\co\cap\sltri$) is the principal (resp. minimal) 
nilpotent orbit in $\sltri$.  Such an orbit $\co$ is called a {\it low-$\GR{A}{2}$ orbit}.
Every $\GR{A}{2}$-pair is friendly (with $\tilde\co$ divisible and $\co=\tilde\co^\ltr$), 
but not vice versa. 
For $e\in\co$, we have
$e\in [\g^e,\g^e]$ (because this holds inside $\sltri$). Nilpotent elements (orbits) with this
property are said to be {\it reachable\/}. They have already been studied in \cite{EG, reach}. 
Let $\g^e=\bigoplus_{i\ge 0}\g^e(i)$ be the grading of $\g^e$ determined by a characteristic
of $e$.
For $e$ lying in a low-$\GR{A}{2}$ orbit,  we prove that  $\g^e$ is generated
by the Levi subalgebra $\g^e(0)$ and two elements in $\g^e(1)\subset \g^e_{nil}$
(Theorem~\ref{thm:reach}). In particular,
$\g^e_{nil}\subset [\g^e,\g^e]$  and $\g^e_{nil}$ is generated by the subspace $\g^e(1)$.
The latter provides a partial answer to \cite[Question\,4.6]{reach}, see also 
Section~\ref{sect:A2}. Theorem~\ref{thm:reach} can be regarded as an application 
(in case of $G=SL_3$)
of a general result that describes the structure of the space of $(U,U)$-invariants 
for any simple $G$-module~\cite[Theorem\,1.6]{odno-sv}. Here $U$ is a 
maximal unipotent subgroup of $G$.
For $\g$  exceptional, we derive  the list of $\GR{A}{2}$-pairs from results of 
Dynkin~\cite{dy}.

{\bf Acknowledgements.} {\small  This work was completed during my 
stay at the Max-Planck-Institut f\"ur Mathematik (Bonn). 
I am grateful to this institution for the warm hospitality and support.}

\section{$\tri$-triples and centralisers}
\label{sect:prelim}

\noindent
We collect  some basic  facts on $\tri$-triples, associated 
$\BZ$-gradings, and centralisers of nilpotent elements.

Let $\g$ be a simple Lie algebra with a fixed triangular decomposition
$\g=\ut_-\oplus\te\oplus\ut_+$ and $\Delta$ the root system of $(\g,\te)$.
The roots of $\ut_+$ are positive. Write $\Delta_+$ (resp. $\Pi$) for
the set of positive (resp. simple) roots; $\theta$ is the highest root in $\Delta^+$. 
For $\gamma\in\Delta$, $\g_\gamma$ is the corresponding root space.
The Killing form on $\g$ is denoted by $\eus K$, and the induced bilinear form
on $\te^*_{\BQ}$ is denoted by $(\ ,\ )$.
For $x\in\g$, let $G^x$ and $\g^x$ denote its
centralisers in $G$ and $\g$, respectively. 
Let $\N \subset\g$ be
the cone of nilpotent elements. By the Morozov-Jacobson theorem 
each nonzero element $e\in\N$ can be
included in an $\tri$-triple $\{e,h,f\}$ (i.e.,
$[e,f]=h,\ [h,e]=2e,\ [h,f]=-2f$). The semisimple element $h$, which is called
a {\it characteristic} of $e$, determines the $\BZ$-grading of $\g$:
\[
 \g=\bigoplus_{i\in\BZ}\g( i)  \ ,
\]
where $\g( i) =\{\,x\in\g\mid [h,x]=ix\,\}$.
Set $\g({\ge}j)=\oplus_{i\ge j}\g( i) $.
The orbit $G{\cdot}h$ contains a unique element $h_+$ such that
$h_+\in\te$ and $\alpha(h_+)\ge 0$
for all $\alpha\in\Pi$.
The Dynkin diagram of $\g$ equipped with the numerical
marks $\ap(h_+)$, $\ap\in\Pi$, at the corresponding nodes
is called the
{\it weighted Dynkin diagram} of (the $G$-orbit of) $e$, denoted $\ede$. 
It is known that \par
(a) \ \cite[Theorem\,8.2]{dy} $\tri$-triples $\{e,h,f\}$ and $\{e',h',f'\}$
are $G$-conjugate {\sl if and only if\/}
$h$ and $h'$ are $G$-conjugate {\sl if and only if\/}
$\ede=\eus D(e')$; \par 
(b) \ \cite[Theorem\,8.3]{dy} $\ap(h_+)\in\{0,1,2\}$; \par
(c) \ \cite[Corollary\,3.7]{ko59} $\tri$-triples $\{e,h,f\}$ and $\{e',h',f'\}$
are $G$-conjugate {\sl if and only if\/} $e$ and $e'$ are $G$-conjugate.
\\[.6ex]
Let $G(0)$ (resp. $P$) denote the connected subgroup of $G$ with Lie algebra
$\g( 0)$ (resp. $\g({\ge} 0)$). 
Set $K = G^e\cap G(0)$.
The following facts on the structure of centralisers $G^e\subset G$ and
$\g^e\subset\g$ are standard, see \cite[ch.\,III, \S\,4]{ss} or \cite[Ch.\,3]{CM}.
\begin{prop}      \label{stab}
Let $\{e,h,f\}$ be an $\tri$-triple. Then
\begin{itemize}
\item[\sf (i)] \ $K=G^e\cap G^f$, and it is a maximal
reductive subgroup in both $G^e$ and $G^f$; \
$G^e\subset P$;
\item[\sf (ii)] \ the Lie algebra $\g^e$ is non-negatively
graded: $\g^e=\bigoplus_{i\ge 0} \g^e( i) $, where $\g^e( i) =\g^e\cap\g( i) $. 
Here $\g^e_{nil}:=\g^e({\ge}1)$ is the nilpotent  radical 
and $\g^e_{red}:=\g^e(0)$ is a Levi subalgebra of $\g^e$;
\item[\sf (iii)] \ 
$\ad e:\g( i-2)\to\g ( i) $ is injective
for $i\le 1$ and surjective for $i\ge 1$;
\item[\sf (iv)] \ $(\ad e)^i : \g(-i) \to\g( i) $ 
is one-to-one;
\item[\sf (v)] \ $\dim\g^e=\dim \g( 0)+\dim \g( 1) $ and\/
$\dim\g^e_{nil}=\dim \g( 1)+\dim \g( 2) $.
\end{itemize}
\end{prop}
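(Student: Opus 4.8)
The common engine behind all five parts is the representation theory of the three-dimensional subalgebra $\tri=\mathsf{span}\{e,h,f\}$ acting on $\g$ by $\ad$. Decompose $\g=\bigoplus_k V_{d_k}$ into irreducible $\tri$-modules, where $V_d$ denotes the irreducible of dimension $d+1$. On $V_d$ the operator $\ad h$ has the simple spectrum $d,d-2,\dots,-d$ with one-dimensional eigenspaces; $\ad e$ raises the $\ad h$-weight by $2$ and annihilates only the top weight vector, while $\ad f$ lowers it and annihilates only the bottom one. Since $\g(i)$ is the $i$-eigenspace of $\ad h$, it is the sum of the weight-$i$ lines of the constituents. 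I would derive the Lie-algebra statements (ii)--(v) formally from this picture, and treat the group statement (i) last.

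For (iii), on each $V_d$ the map $\ad e\colon V_d(i-2)\to V_d(i)$ is a linear map of (at most one-dimensional) weight spaces, injective unless $i-2$ is the top weight $d$, and surjective onto $V_d(i)$ unless $i$ is the bottom weight $-d$. If $i\le 1$, then $i-2\le -1<d$ for all $d\ge 0$, so injectivity holds on every constituent, hence on $\g(i-2)$; if $i\ge 1$ and $V_d(i)\neq 0$, then $i\le d$ forces $i\neq -d$, so surjectivity holds on every constituent, hence onto $\g(i)$. Part (iv) is the hard Lefschetz property of $\tri$: whenever $V_d(-i)\neq 0$, the iterate $(\ad e)^i\colon V_d(-i)\to V_d(i)$ maps one symmetric weight line isomorphically onto the other, so $(\ad e)^i$ is injective on $\g(-i)$; as $\dim\g(-i)=\dim\g(i)$ by symmetry of the weight multiplicities, it is bijective.

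Part (v) is a dimension count off (iii). Since $\g^e=\Ker(\ad e)$ is $\ad h$-stable it is graded, with $\g^e(i)=\Ker\bigl(\ad e\colon\g(i)\to\g(i+2)\bigr)$; the $i<0$ case of injectivity in (iii) gives $\g^e(i)=0$ for $i<0$, which is the non-negative grading in (ii). For $i\ge 0$ the map $\ad e\colon\g(i)\to\g(i+2)$ is onto by (iii), so $\dim\g^e(i)=\dim\g(i)-\dim\g(i+2)$; summing over $i\ge 0$ and over $i\ge 1$, the even and odd subseries telescope to $\dim\g^e=\dim\g(0)+\dim\g(1)$ and $\dim\g^e_{nil}=\dim\g(1)+\dim\g(2)$. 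To finish (ii), I identify $\g^e(0)=\g^e\cap\g(0)=\g^e\cap\g^h$ with the centraliser $\g^{\tri}$ of the whole triple: if $[e,x]=[h,x]=0$ then $[f,x]\in\g^e\cap\g(-2)=0$, so $x\in\g^f$ too; hence $\g^e(0)=\g^{\tri}$ is the centraliser of a semisimple subalgebra and therefore reductive, while $\g^e_{nil}=\g^e({\ge}1)$ is an ideal of $\ad$-nilpotent elements, i.e. the nilradical, with $\g^e(0)$ a Levi complement.

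The group statement (i) is where the genuine work lies. The equality $K=G^e\cap G^f$ rests on uniqueness of the opposite element: any $g$ fixing $e$ and $f$ fixes $h=[e,f]$, so $g\in G^h=G(0)$ and $g\in K$; conversely, for $g\in K=G^e\cap G^h$ the triple $\{e,h,\Ad(g)f\}$ again completes $(e,h)$, and since $\g^e\cap\g(-2)=0$ forces such a completion to be unique, $\Ad(g)f=f$. The remaining assertions---that $K$ is a maximal reductive subgroup of $G^e$ and of $G^f$, and that $G^e\subset P$---I would obtain from the algebraic-group structure of centralisers: $G^e=K\ltimes U$ with $U$ unipotent and $\Lie U=\g^e_{nil}$, and $G^e\subset P=P(\gamma)$ by checking that $\lim_{t\to 0}\gamma(t)g\gamma(t)^{-1}$ exists for $g\in G^e$, where $\gamma$ is the cocharacter with $\Ad\gamma(t)|_{\g(i)}=t^i$. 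I expect this last step to be the main obstacle: passing from the Lie-algebra reductivity of $\g^e(0)$ to group-level maximality of $K$ and to the inclusion $G^e\subset P$ uses the conjugacy theory of $\tri$-triples and parabolic subgroups rather than formal $\tri$-representation theory, and here I would appeal to \cite{ss}.
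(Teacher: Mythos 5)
Your proof is correct, but there is nothing in the paper to compare it against line by line: the paper does not prove Proposition~\ref{stab} at all, declaring these facts standard and citing \cite[ch.\,III, \S\,4]{ss} and \cite[Ch.\,3]{CM}. What you have written is precisely the canonical argument from those references. Your treatment of (ii)--(v) via the decomposition of $\g$ into irreducible $\tri$-modules is complete: the weight-line analysis on each constituent gives (iii) (for $i\le 1$ one has $i-2\le -1< d$, for $i\ge 1$ one has $i\ne -d$) and (iv), the telescoping count $\dim\g^e(i)=\dim\g(i)-\dim\g(i+2)$ gives (v), and your computation $[f,x]\in\g^e\cap\g(-2)=0$ correctly identifies $\g^e(0)$ with the centraliser of the whole triple, whence reductivity of the Levi part; note also your implicit use of the fact that $\g^e$ is $\ad h$-stable, which is what makes the grading of $\g^e$ meaningful. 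For (i), your uniqueness-of-the-opposite argument ($[e,f-f']=0$ and $[h,f-f']=-2(f-f')$ force $f-f'\in\g^e(-2)=0$) is the classical one. The only point you gloss: the paper defines $K=G^e\cap G(0)$ with $G(0)$ \emph{connected}, so to conclude $G^e\cap G^f\subset K$ you need $Z_G(h)=G(0)$, i.e.\ that $Z_G(h)$ is connected; this holds because $Z_G(h)$ is the centraliser of the image of the cocharacter $\gamma$ with $\Ad\gamma(t)\vert_{\g(i)}=t^i$, that is, of a torus, hence a connected Levi subgroup. Your deferral of the remaining group-level facts (maximal reductivity of $K$, $G^e=K\ltimes U$ with $U$ unipotent, and $G^e\subset P$ via the limit criterion $\lim_{t\to 0}\gamma(t)g\gamma(t)^{-1}$) to \cite{ss} is exactly consistent with the paper's own practice, and your instinct that this is where the genuine, non-formal work lies is accurate.
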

\noindent
The {\it height\/} of $e\in\N$, denoted $\htt(e)$, is the 
maximal integer $m$ such that $(\ad e)^m\ne 0$. By Proposition~\ref{stab}(iii), we also have
$\htt(e)=\max\{i\mid \g(i)\ne 0\}$.
If $e$ is even, then $\htt(e)$ 
is even, but the converse is not true. If $l_\ap=\ap(h_+)$, $\ap\in\Pi$, 
are the numerical marks of $\ede$ and $\theta=\sum_{\ap\in\Pi} n_\ap \ap$, then
\begin{equation}  \label{height}
\htt(e)=\theta(h_+)=\sum_{\ap\in\Pi} l_\ap n_\ap .
\end{equation}

\noindent
{\bf Warning}. We will consider two notions of height: the above height of $e\in \N$ and the usual height
of a root $\nu\in\Delta$, denoted $\hot(\nu)$.

\section{First properties of divisible orbits}
\label{sect:gen-prop} 

\noindent
We fix an $\tri$-triple $\{e,h,f\}$ containing $e\in\N$  
and work with the corresponding $\BZ$-grading of $\g$.
Recall that $e$ is {\it even\/} if and only if $\g(i)=0$ for $i$ odd.
Then the integer $\htt(e)$ is also even.
If $\co=G{\cdot}e$ is divisible, then the orbit corresponding to $\frac{1}{2}\ede$
is denoted by $\co^{\langle 2\rangle}$ and we write
$\edva$ for an  element of $\co^{\langle 2\rangle}$ with characteristic $h/2$.
That is, we assume that $\edva\in\g(4)$ and there is an 
$\tri$-triple of the form $\{\edva,h/2, f^{\langle 2\rangle}\}$.
By a result of Vinberg, $G(0)$ has finitely many orbits in $\g( i)$ for each  $i\ne 0$.
Our first observation is

\begin{prop}  \label{trivial} Suppose that $e\in\N$ is even. \\
1.  Let $\co'$ be the dense $G(0)$-orbit in $\g(4)$. 
Then 
$\ede$ is divisible if and only if \ 
$\eus K(\g^x(0),h)=0$  for some (=\,any) $x\in\co'$. 
In this case,  any element of  $\co'$ can be taken  as $\edva$.

2. For any $x\in\g(4)$, one has $\htt(x)\le \frac{1}{2}\htt(e)$. 
If \ $\ede$ is divisible, then\/ $\htt(\edva) =\frac{1}{2}\htt(e)$.
\end{prop}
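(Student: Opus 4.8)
\quad
The plan is to reduce everything to $\tri$-theory for the grading defined by $h/2$. Recall from the Introduction that $\ede$ is divisible precisely when $h/2$ is a characteristic of some nilpotent element; any such element lies in the $2$-eigenspace of $\ad(h/2)$, i.e. in $\g(4)$, and is automatically nilpotent because $\ad$ raises the $h$-degree on a bounded grading. Since $e$ is even, $\g(0)=\g^h=\g^{h/2}$, so the grading by the eigenvalues of $\ad(h/2)$ is the $\BZ$-grading $\g=\bigoplus_j\g(2j)$, and the connected degree-zero subgroup for it is again $G(0)$. I shall also use that $\eus K$ restricts nondegenerately to $\g(0)$ and pairs $\g(4)$ with $\g(-4)$ nondegenerately.

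For Part 1, fix $x\in\g(4)$ and look at $\ad x\colon\g(-4)\to\g(0)$. Invariance of $\eus K$ gives $\eus K([x,a],b)=-\eus K(a,[x,b])$ for $a\in\g(-4)$, $b\in\g(0)$; since $[x,b]\in\g(4)$ and the pairing $\g(4)\times\g(-4)$ is nondegenerate, this vanishes for all $a$ iff $[x,b]=0$. Hence the $\eus K$-orthogonal complement of $\Ima(\ad x\colon\g(-4)\to\g(0))$ inside $\g(0)$ equals $\g^x(0)=\g^x\cap\g(0)$, and nondegeneracy of $\eus K$ on $\g(0)$ turns this into $[x,\g(-4)]=\g^x(0)^{\perp}$. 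Therefore $\eus K(\g^x(0),h)=0$ iff $h\in[x,\g(-4)]$, i.e. iff some $z\in\g(-4)$ satisfies $[x,z]=h/2$; combined with the automatic relations $[h/2,x]=2x$ and $[h/2,z]=-2z$, this says exactly that $\{x,h/2,z\}$ is an $\tri$-triple, so $x$ has characteristic $h/2$. (The converse uses that, for a nilpotent with characteristic $h/2$, the opposite element may be chosen in the $(-2)$-eigenspace $\g(-4)$.) Thus, for each $x\in\g(4)$, the condition $\eus K(\g^x(0),h)=0$ is equivalent to $x$ having characteristic $h/2$.

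Two points then finish Part 1. First, the condition is $G(0)$-invariant: for $g\in G(0)\subseteq G^h$ one has $\g^{gx}(0)=g\g^x(0)$ and $\eus K(g\g^x(0),h)=\eus K(\g^x(0),g^{-1}h)=\eus K(\g^x(0),h)$, so it is constant on $\co'$, giving ``some $=$ any''. Second, if $\ede$ is divisible, pick a nilpotent $y$ with characteristic $h/2$; then $y\in\g(4)$, and Proposition~\ref{stab}(iii) applied to the $h/2$-grading makes $\ad y\colon\g(0)\to\g(4)$ surjective with kernel $\g^y(0)$, so $\dim G(0){\cdot}y=\dim\g(0)-\dim\g^y(0)=\dim\g(4)$ and hence $G(0){\cdot}y=\co'$. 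Thus divisibility forces every $x\in\co'$ to have characteristic $h/2$ and to serve as $\edva$, while the reverse implication is immediate from the previous paragraph. (As $h$ is dominant, so is $h/2$, and its marks $l_\ap/2$ lie in $\{0,1\}$, so $\tfrac12\ede$ is indeed the weighted Dynkin diagram of $\co'$.)

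For Part 2, the inequality is a degree count: $(\ad x)^m$ shifts the $h$-grading by $4m$, so $(\ad x)^m\ne0$ forces $\g(j)\ne0$ and $\g(j+4m)\ne0$ for some $j$; as the $h$-grading is symmetric with extreme degrees $\pm\htt(e)$ this gives $4m\le2\htt(e)$, i.e. $\htt(x)\le\tfrac12\htt(e)$. When $\ede$ is divisible, $\edva\in\g(4)$ has characteristic $h/2$, so Proposition~\ref{stab}(iii) for the $h/2$-grading yields $\htt(\edva)=\max\{j\mid\g(2j)\ne0\}$; since $e$ is even, $\htt(e)$ is even and is the top nonzero $h$-degree, so this maximum is $\tfrac12\htt(e)$. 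These height computations and the $G(0)$-invariance are routine; the one place that needs care is the Killing-form duality $[x,\g(-4)]^{\perp}=\g^x(0)$ together with the dimension count showing that a nilpotent with characteristic $h/2$ automatically fills the open $G(0)$-orbit $\co'$ of $\g(4)$ — it is exactly this that makes the criterion independent of the chosen $x\in\co'$.
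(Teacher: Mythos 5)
Your proof is correct and takes essentially the same route as the paper: its key step, the Killing-form duality $[x,\g(-4)]=\g^x(0)^{\perp}$ in $\g(0)$, is exactly the paper's observation that $\eus K(\g^x(0),h)=0$ is equivalent to $h\in\Ima(\ad x)$, and from there the graded $\tri$-triple $\{x,h/2,z\}$, the $G(0)$-invariance with the density argument via surjectivity of $\ad y\colon\g(0)\to\g(4)$, and the degree counts in Part~2 are precisely the details the paper leaves as ``the rest is clear'' and ``obvious''.
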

\begin{proof}
1. The condition $\eus K(\g^x(0),h)=0$ is equivalent to that $h\in\Ima(\ad x)$.
The rest  is clear. 

2. The first assertion is obvious; the second follows from \eqref{height}.
\end{proof}

\begin{rmk}
Using the ``support method'' for
nilpotent elements \cite[\S\,5]{t55}, we can prove that if $G{\cdot}e$ is not divisible
and $h'$ is a characteristic of a nonzero $x\in\g(4)$, then $\| h'\| < \frac{1}{2}\|h\|$. 
Still, it can happen that $\htt(x)= \frac{1}{2}\htt(e)$
for a generic $x\in\g(4)$. (For instance, consider the non-divisible even
orbit $\GR{A}{2}+3\GR{A}{1}$ for  $\g=\GR{E}{7}$. Here $\htt(e)=4$ and, obviously,
$\htt(x)\ge 2$ for all nonzero $x\in\N$.) 

\end{rmk}
\begin{prop}   \label{first}
1)  For any nonzero $e\in \N$, we have \\   \indent
$\dim\Ker(\ad e)^2=\dim\g( 0)+2\dim\g(1)+\dim \g( 2)=\dim\g^e+\dim\g^e_{nil}$.

\noindent
2) If \ $\ede$ is divisible, then 
\[
   \dim\g^{\edva}=\dim\Ker(\ad e)^2=\dim\g^e+\dim\g^e_{nil}\, .
\]
In particular, $\dim\g^e_{nil}$ is even.
\end{prop}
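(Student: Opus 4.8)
The plan is to establish the two displayed equalities of part~1) first, and then deduce part~2) from part~1) together with Proposition~\ref{stab}.

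For part~1), the rightmost equality is immediate: Proposition~\ref{stab}(v) gives $\dim\g^e=\dim\g(0)+\dim\g(1)$ and $\dim\g^e_{nil}=\dim\g(1)+\dim\g(2)$, and adding these yields $\dim\g(0)+2\dim\g(1)+\dim\g(2)$. For the first equality I would decompose $\g$ as a module over the subalgebra $\textsf{span}\{e,h,f\}\cong\tri$, writing $\g=\bigoplus_j R(d_j)$ where $R(d)$ denotes the irreducible of dimension $d+1$ and $\ad e$ acts on it as the raising operator. On a single summand $R(d)$ the kernel of $(\ad e)^2$ is spanned by the vectors of weight $d$ and $d-2$ (only the weight-$d$ vector when $d=0$), hence has dimension $\min(d+1,2)$. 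On the other hand $R(d)$ meets $\g(0)$ (resp. $\g(1)$, resp. $\g(2)$) in a line precisely when $d$ is even (resp. $d$ is odd, resp. $d$ is even and $\ge 2$). A short case check on the parity and size of $d$ shows that for every $d$ the integer $\min(d+1,2)$ equals the contribution of $R(d)$ to $\dim\g(0)+2\dim\g(1)+\dim\g(2)$; summing over all summands gives $\dim\Ker(\ad e)^2=\dim\g(0)+2\dim\g(1)+\dim\g(2)$.

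For part~2), since $e$ is even we have $\g(1)=0$, so part~1) reads $\dim\Ker(\ad e)^2=\dim\g(0)+\dim\g(2)$. The key point is that the grading of $\g$ determined by the characteristic $h/2$ of $\edva$ is the ``halved'' one: writing $\g=\bigoplus_j\g[j]$ with $\g[j]=\{x\in\g:[h/2,x]=jx\}$, we have $\g[j]=\g(2j)$, and in particular $\g[0]=\g(0)$, $\g[1]=\g(2)$. Applying Proposition~\ref{stab}(v) to the triple $\{\edva,h/2,f^{\ltr}\}$ then gives $\dim\g^{\edva}=\dim\g[0]+\dim\g[1]=\dim\g(0)+\dim\g(2)$, which coincides with $\dim\Ker(\ad e)^2$ by the previous sentence; combined with part~1) this is the claimed chain of equalities.

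Finally, for the evenness of $\dim\g^e_{nil}$ I would argue by parity. The equalities just proved give $\dim\g^e_{nil}=\dim\g^{\edva}-\dim\g^e$. Every nilpotent orbit $G{\cdot}x$ is even-dimensional (being a coadjoint, hence symplectic, orbit), so $\dim\g^x=\dim\g-\dim(G{\cdot}x)\equiv\dim\g\pmod 2$ for all $x\in\N$; applying this to $x=e$ and to $x=\edva$ shows both centraliser dimensions have the parity of $\dim\g$, so their difference $\dim\g^e_{nil}$ is even. I expect the only steps needing care to be the identification $\g[j]=\g(2j)$ of the two gradings in part~2) and the bookkeeping in the case analysis of part~1); everything else is a direct application of Proposition~\ref{stab}.
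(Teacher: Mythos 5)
Your proof is correct, and its core is the same as the paper's: for part 2) you do exactly what the paper does (use $\g(1)=0$, pass to the halved grading $\widetilde\g(i)=\g(2i)$ determined by $h/2$, and apply Proposition~\ref{stab}(v) to the triple $\{\edva,h/2,f^{\ltr}\}$). For part 1) the paper is laconic --- it simply says the equality follows from Proposition~\ref{stab}(iii)--(v), i.e.\ from the graded injectivity/surjectivity properties of $\ad e$ --- whereas you make the underlying $\tri$-theory explicit by decomposing $\g$ into irreducible $\textsf{span}\{e,h,f\}$-modules $R(d)$ and checking that $\dim\Ker\bigl((\ad e)^2|_{R(d)}\bigr)=\min(d+1,2)$ matches the contribution of $R(d)$ to $\dim\g(0)+2\dim\g(1)+\dim\g(2)$ in every parity case; this is a fully detailed, self-contained version of the same computation, and your case check is accurate. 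One genuine added value: the paper's final sentence asserts without proof that ``the dimension of all centralisers has the same parity,'' while you supply the justification (adjoint orbits are symplectic, hence even-dimensional, so $\dim\g^x\equiv\dim\g \pmod 2$ for $x=e$ and $x=\edva$), which closes the only gap a careful reader might flag in the original.
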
\begin{proof}
1) This follows from Proposition~\ref{stab}(iii)--(v).

2)  Now $e$ is even, hence  $\g(1)=0$.
Let $\{\widetilde\g(i)\}_{i\in\BZ}$ be the
$\BZ$-grading determined by  $h/2$, i.e., 
$\widetilde{\g}( i)=\g( 2i)$. Then $\dim\g^{\edva}=
\dim\widetilde{\g}( 0)+\dim \widetilde{\g}( 1)=\dim\g( 0)+\dim \g( 2)$
by virtue of Proposition~\ref{stab}(v).   Since the dimension of
all centralisers has the same parity, $\dim\g^e_{nil}$ is even.
\end{proof}

One may refine these necessary conditions using $\BN$-gradings of
centralisers. We assume that each centraliser is equipped with the "natural" $\BN$-grading, i.e., those determined by its own  characteristic.
 
\begin{prop}  
If $\ede$ is divisible, then
\begin{itemize}
\item[(a)] \ $\dim\g^e( 2i)+\dim\g^e( 2i{+}2)=
\dim\g^{\edva}( i)$ for all $i\ge 0$ \ and 
\item[(b)] \ 
$\dim\g^e( 4j{-}2)+\dim\g^e( 4j)$ is even for 
all $j\ge 1$.
\end{itemize}
\end{prop}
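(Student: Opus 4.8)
The plan is to reduce both statements to the graded-dimension formula that $\tri$-theory supplies for a centraliser. For $j\ge 0$, Proposition~\ref{stab}(iii) says that $\ad e\colon \g(j)\to\g(j+2)$ is surjective, and since $\g^e(j)=\Ker\bigl(\ad e|_{\g(j)}\bigr)$, this gives $\dim\g^e(j)=\dim\g(j)-\dim\g(j+2)$. Applying the same reasoning to the triple $\{\edva,h/2,f^{\ltr}\}$ with its own grading $\widetilde\g(i)=\g(2i)$ (in which $\edva$ has $h/2$-weight $2$), I obtain $\dim\g^{\edva}(i)=\dim\widetilde\g(i)-\dim\widetilde\g(i+2)=\dim\g(2i)-\dim\g(2i+4)$ for $i\ge 0$. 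Note that the grading used on $\g^{\edva}$ is its \emph{own} characteristic $h/2$, so the indices will line up exactly as claimed.

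For part (a) I would simply add the first formula at $j=2i$ and $j=2i+2$ (both nonnegative): the sum $\bigl(\dim\g(2i)-\dim\g(2i+2)\bigr)+\bigl(\dim\g(2i+2)-\dim\g(2i+4)\bigr)$ telescopes to $\dim\g(2i)-\dim\g(2i+4)$, which is exactly $\dim\g^{\edva}(i)$ by the second formula. This settles (a) with no further input.

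For part (b), taking $i=2j-1$ in (a) yields $\dim\g^e(4j-2)+\dim\g^e(4j)=\dim\g^{\edva}(2j-1)$, so it suffices to show that the odd-degree pieces of $\g^{\edva}$ are even-dimensional. Now $\dim\g^{\edva}(m)$ counts the irreducible summands of highest $h/2$-weight $m$ in the decomposition of $\g$ under the $\tri$ spanned by $\{\edva,h/2,f^{\ltr}\}$. The key point is to exploit the Killing form $\eus K$: it is invariant and non-degenerate, hence it restricts to a non-degenerate invariant form on each isotypic component, distinct components being $\eus K$-orthogonal by Schur's lemma since every irreducible $\tri$-module is self-dual. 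On the isotypic component of the $(m+1)$-dimensional irreducible, an invariant form is a tensor of an invariant form on that irreducible with a form on the multiplicity space; because the invariant form on the $(m+1)$-dimensional irreducible is symmetric for $m$ even and skew-symmetric for $m$ odd, matching the symmetric $\eus K$ forces the multiplicity-space form to be skew-symmetric when $m$ is odd. A non-degenerate skew form exists only in even dimension, so $\dim\g^{\edva}(m)$ is even for $m$ odd, in particular for $m=2j-1$.

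The routine half is the telescoping in (a); the only real content is the parity argument in (b), and within it the single representation-theoretic fact that the self-duality of the $(m+1)$-dimensional simple $\tri$-module is orthogonal for $m$ even and symplectic for $m$ odd. The main obstacle, such as it is, is to set up the invariant-form bookkeeping cleanly (orthogonality of distinct isotypic components, and the symmetric/skew splitting of the tensor factors) rather than any deep difficulty.
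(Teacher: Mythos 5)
Your proof is correct and follows the paper's own route: in (a) you telescope $\dim\g^e(j)=\dim\g(j)-\dim\g(j+2)$ against $\dim\g^{\edva}(i)=\dim\g(2i)-\dim\g(2i+4)$, exactly as the paper does via the surjections $\g(2i)\overset{\ad e}{\longrightarrow}\g(2i{+}2)\overset{\ad e}{\longrightarrow}\g(2i{+}4)$ and $\widetilde\g(i)\overset{\ad\edva}{\longrightarrow}\widetilde\g(i{+}2)$, and in (b) you reduce, as the paper does, to the evenness of $\dim\g^{\edva}(2j-1)$, i.e., of the odd-graded pieces of the centraliser of a nilpotent element. The only difference is that the paper cites this parity fact as well known (referring to \cite[Prop.\,1.2]{aif99}), whereas you prove it directly via the orthogonal/symplectic dichotomy for invariant forms on $\tri$-isotypic components of $(\g,\eus K)$ --- a correct, standard argument that merely makes the cited step self-contained rather than constituting a different approach.
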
\begin{proof}
For $i\ge 0$, there are the surjective mappings:
\begin{gather*}
 \g( 2i)\overset{\ad e}{\longrightarrow}\dim\g( 2i{+}2)
\overset{\ad e}{\longrightarrow}\dim\g( 2i{+}4), \\
 \widetilde{\g}( i)=
\g( 2i)\overset{\ad \edva}{\longrightarrow}\dim\g( 2i{+}4) =\widetilde{\g}( i{+}2).
\end{gather*} 
This yields (a).
(Recall that the grading of $\g^{\edva}$ is
determined by $h/2$.) 
It is well known that, for any $x\in\N$,
$\dim\g^x(i)$ is even whenever $i$ is odd. (E.g. this readily follows from
\cite[Prop.\,1.2]{aif99}.)
Applying this to $x=\edva$ with  $i=2j-1$, we get (b).
\end{proof}

Since $\g^e( 2)\ne 0$, applying the proposition with $i=1$
shows that $\g^{\edva}( 1)\ne 0$.

\begin{rmk}   \label{rem:ind-D}
In \cite[\S\,2]{dy}, Dynkin defined the {\it index\/}  of a simple subalgebra of a simple Lie 
algebra, which is always a nonnegative integer. 
Let $\ind(e)$ denote the index of the subalgebra generated by $\{e,h,f\}$.
It is easily seen that if $\ede$ is divisible, then
$\ind (e)=4\, \ind(\edva)$, i.e., $\ind(e)/4\in \BN$. (The proof essentially boils down to the
equality $\eus K(h,h)/\eus K(h/2,h/2)=4$.) 
It is worth noting that $\ind(e)$ can be odd  for an even nilpotent element $e$. 
Hence the condition that 
$\ind(e)/4\in \BN$ is not vacuous. 
\end{rmk}
\begin{rmk}   \label{es}
Let $S$ be a connected semisimple subgroup of $G$ with Lie algebra $\es$.  Clearly,
if $e\in\N\cap\es$ and the orbit $S{\cdot}e$ is divisible, then so is
$G{\cdot}e$. 
But the converse is not always true.
The simplest (counter)example is guaranteed by Morozov and Jacobson:
any nonzero nilpotent element is included in $\tri$, but the nilpotent orbit in
$\tri$ is not divisible.
\end{rmk}

\section{Classification of divisible orbits}

\subsection{The classical cases}
Let $\BV$ be a finite-dimensional $\bbk$-vector space and $\g=\g(\BV)$  a classical simple 
Lie algebra, i.e.,  $\slv$, or $\sov$, or $\spv$. In the last two cases, $\BV$ is endowed with
a bilinear non-degenerate form $\Phi$, which is symmetric or skew-symmetric, respectively.
It is customary to represent the nilpotent orbits (elements) by
partitions of $\dim\BV$, and our criterion for $\ede$ to be divisible is given
in terms of  partitions. 

Recall that $\boldsymbol{\lb}=(\lb_1,\dots,\lb_n)$ is a partition of $N$ if
$\sum\lb_i=N$ and $\lb_1\ge\lb_2\ge\dots\ge\lb_n > 0$.
For $e\in\N\subset\g(\BV)$, let $\boldsymbol{\lb}[e]$ denote the corresponding partition
of $N=\dim\BV$. If 
$\boldsymbol{\lb}[e]=(\lb_1,\dots,\lb_n)$, then we can decompose $\BV$ into a
sum of cyclic $e$-modules (Jordan blocks):  
\beq  \label{eq:razlozh}
    \BV=\bigoplus_{i=1}^n \BV[i] ,
\eeq 
where $\dim\BV[i]=\lb_i$. For all classical Lie algebras, the explicit formulae for $\htt(e)$ in terms of 
$\boldsymbol{\lb}[e]$ are given in \cite{aif99}. We recall them below.

\begin{thm}  \label{sl+sp+so}
Let $e\in \g(\BV)$ be a nilpotent element with  partition $\boldsymbol{\lb}[e]=(\lb_1,\dots,
\lb_n)$.
\begin{itemize}
\item[\sf (i)]\ Suppose $\g=\slv$ or $\spv$. Then $\ede$ is divisible if and only if
all $\lb_i$ are odd.
\item[\sf (ii)]\  Suppose $\g=\sov$. Then $\ede$ is divisible if and only if
the following conditions hold:
\begin{itemize}
\item all $\lb_i$ are odd.
\item if $\lb_{2k+1}=4l+3$, then $\lb_{2k+2}=4l+3$ as well;
\item if $\lb_{2k+1}=4l+1>1$, then $\lb_{2k+2}=4l+1$ or $4l-1$. \\(There is no further conditions if $\lb_{2k+1}=1$.)
\end{itemize}
\end{itemize}
In all  cases, $\boldsymbol{\lb}[\edva]$ is obtained
by the following procedure: each odd part $\lb_i=2l+1\ge 3$ is replaced with two
parts $l+1$ and $l$. The resulting collection of parts determines the
required partition.
\end{thm}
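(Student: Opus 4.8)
The plan is to translate divisibility into a statement about the eigenvalues of the characteristic on $\BV$ and then read off the resulting partition. Fix the $\tri$-triple $\{e,h,f\}$ and decompose $\BV=\bigoplus_{i=1}^n\BV[i]$ into the cyclic $e$-modules of \eqref{eq:razlozh}; as a module over $\tri=\mathsf{span}\{e,h,f\}$ the block $\BV[i]$ is irreducible of dimension $\lb_i$, so $h$ acts on it with eigenvalues $\lb_i-1,\lb_i-3,\dots,-(\lb_i-1)$. First I would record the elementary fact that $e$ is even (a prerequisite for divisibility, since otherwise $1$ occurs in $\ede$) if and only if every $\lb_i$ is odd: the eigenvalues of $h$ on $\BV[i]$ are all even precisely when $\lb_i$ is odd. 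This settles the first condition in both (i) and (ii) and lets me assume henceforth that $\lb_i=2l_i+1$ for all $i$, so that $h/2$ is a well-defined semisimple element with integer eigenvalues $l_i,l_i-1,\dots,-l_i$ on $\BV[i]$.

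The core step identifies $\boldsymbol{\lb}[\edva]$. By Proposition~\ref{trivial}, $\ede$ is divisible exactly when $h/2$ is a characteristic, i.e. when some nilpotent $x\in\g(4)$ fits into an $\tri$-triple $\{x,h/2,\cdot\}$. If such an $x$ exists, the $\tri$-module structure it induces on $\BV$ has $h/2$ as its semisimple part, so the multiset of $h/2$-eigenvalues on $\BV$ must be a disjoint union of the symmetric strings $\{a,a-2,\dots,-a\}$ attached to the Jordan blocks of $x$. I would then note that such a decomposition of a symmetric integer multiset into symmetric difference-$2$ strings is unique (peel off one string at a time from the top weight down) and, since the eigenvalue profile coming from the $\BV[i]$ is non-increasing in $|a|$, it is additive over the blocks. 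Splitting the consecutive weights $l_i,\dots,-l_i$ by parity produces exactly the two strings of lengths $l_i+1$ and $l_i$, so $\boldsymbol{\lb}[\edva]$ is forced to be the partition obtained from $\boldsymbol{\lb}[e]$ by replacing each $2l_i+1\ge 3$ with $l_i+1$ and $l_i$ (a part $1$ contributing only a $1$). This proves the displayed procedure; for $\slv$ it is realised by $\edva=e^2$, since the square of a single Jordan block of odd size $2l+1$ has Jordan type $(l+1,l)$.

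It remains to decide, in each type, when the forced partition $\boldsymbol{\mu}:=\boldsymbol{\lb}[\edva]$ corresponds to a nilpotent orbit, equivalently (matching eigenvalue multiplicities on $\BV$) when $h/2$ is conjugate to the characteristic of the orbit with partition $\boldsymbol{\mu}$; since $\boldsymbol{\mu}$ is forced, this happens if and only if $\boldsymbol{\mu}$ is admissible. For $\slv$ every partition is admissible, so divisibility holds as soon as all parts are odd. For $\spv$ admissibility means odd parts have even multiplicity; but all parts of $\boldsymbol{\lb}[e]$ are odd, hence occur with even multiplicity by the symplectic constraint, so every part of $\boldsymbol{\mu}$ has even multiplicity and $\boldsymbol{\mu}$ is automatically symplectic, giving (i). For $\sov$ admissibility means even parts have even multiplicity, and this is the only thing that can fail. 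A short count shows that the part value $2v$ ($v\ge 1$) of $\boldsymbol{\mu}$ receives exactly the contributions of the parts $\lb_i=4v-1$ (via $l_i+1$) and $\lb_i=4v+1$ (via $l_i$), so the evenness of every even multiplicity is equivalent to $c_{4v-1}+c_{4v+1}$ being even for all $v\ge 1$, where $c_k$ is the multiplicity of the part $k$. I would finish by checking that after sorting this parity condition is precisely repackaged by the stated positional rules: a value $4l+3$ must be matched with a copy of itself, a value $4l+1>1$ with itself or with $4l-1$, and the part $1$ is free.

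The step I expect to be most delicate is the existence direction for the non-split types, namely producing an actual $\tri$-triple $\{\edva,h/2,f^{\ltr}\}$ inside $\spv$ or $\sov$ once $\boldsymbol{\mu}$ is admissible, since $e^2$ no longer lies in $\g$ (with respect to $\Phi$ it is self-adjoint, not skew). Here I would build $\edva$ after grouping the $\BV[i]$ into $\Phi$-nondegenerate pieces: a hyperbolic pair of equal odd blocks in the symplectic case, and in the orthogonal case a matched pair of blocks, the matching being exactly the one dictated by the positional rules and forced by the fact that a single orthogonal odd block would carry an even part of odd multiplicity. Verifying that the resulting $\edva$ is skew for $\Phi$, is nilpotent of the prescribed Jordan type on each piece, and satisfies $[h,\edva]=4\edva$ is the computational heart; the necessity direction is then immediate, because when a part is even, or when the pairing rules fail, the unique string decomposition of the $h/2$-eigenvalues yields a non-admissible $\boldsymbol{\mu}$, so $h/2$ cannot be a characteristic at all.
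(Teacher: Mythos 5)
Your overall strategy is sound, but your opening ``elementary fact'' is false as stated: in all three classical types, $e$ is even if and only if all parts of $\boldsymbol{\lb}[e]$ have the \emph{same parity}, not if and only if all parts are odd. (Evenness is read off from the $\ad h$-eigenvalues on $\g$, which are \emph{differences} of $h$-eigenvalues on $\BV$; e.g.\ the partition $(2,2)$ in $\mathfrak{sl}_4$ has characteristic $\mathrm{diag}(1,1,-1,-1)$ and diagram $0\,2\,0$, hence is even.) So your reduction ``henceforth $\lb_i=2l_i+1$'' does not dispose of the all-even case, and the necessity of oddness is not yet proved. Fortunately the repair lies inside your own framework, and your closing sentence almost says it: if $h/2$ were a characteristic, an $\tri$-triple $\{x,h/2,y\}\subset\g\subset\glv$ would make $\BV$ an $\tri$-module, so $h/2$ would have \emph{integer} spectrum on $\BV$; for an all-even partition the $h$-eigenvalues on $\BV$ are all odd, so $h/2$ has half-integer spectrum---there is then no string decomposition at all (your phrase ``yields a non-admissible $\boldsymbol{\mu}$'' is off, since no $\boldsymbol{\mu}$ exists). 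The paper rules this case out differently: via the height formula $\htt(e)=2(\lb_1-1)$ together with $\htt(\edva)=\frac{1}{2}\htt(e)$ for $\slv$ and $\spv$, and via the fact that the last two marks of a type-$\GR{D}{}$ diagram have even sum for the very even partitions in $\sov$.

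With that fixed, the rest is correct and takes a genuinely different route in the existence direction. The paper, like you, derives $\boldsymbol{\lb}[\edva]$ from the $h/2$-spectrum (in its $\sov$ analysis), but it then \emph{constructs} $\edva$ explicitly: $\edva=e^2$ in $\slv$; $\pm e^2$ on paired isotropic blocks in $\spv$ and in the $\lb_1=\lb_2$ orthogonal case; and an intricate block-mixing element with $\boldsymbol{\lb}[\edva]=(2m+1,2m,2m,2m-1)$ in the $(4m+1,4m-1)$ case. You replace all of this by the abstract observation that once the forced partition $\boldsymbol{\mu}$ is admissible, a characteristic $h'$ of the orbit with partition $\boldsymbol{\mu}$ has the same $\BV$-spectrum as $h/2$, and semisimple elements of a classical group with equal $\BV$-spectra are conjugate; conjugating the triple makes $h/2$ a characteristic. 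This is slicker and makes the explicit constructions promised in your last paragraph redundant---you are hedging between two proofs, and should commit to one. Two caveats for the abstract route. First, in $\sov$ you need conjugacy under $SO(\BV)$, not just $O(\BV)$; this holds here because $h/2$ always has eigenvalue $0$ on $\BV$ (every odd block contributes it), so a determinant $-1$ conjugator can be corrected by a reflection in the zero eigenspace---without this remark type $\GR{D}{}$ would be suspect, exactly where very even partitions give $O$- but not $SO$-conjugate characteristics. Second, the paper's explicit representatives satisfy $[e,\edva]=0$, a by-product that is the entire proof of Lemma~\ref{commute} (all classical friendly pairs are very friendly); your argument yields no commuting representative, so that lemma would need a separate argument. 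Finally, your class-count criterion ($c_{4v-1}+c_{4v+1}$ even for all $v\ge 1$) is correct and does repackage the paper's positional rules: the parts lying in a given class $\{4v-1,4v+1\}$ are contiguous in the sorted partition, so even class counts allow consecutive pairing within classes, and conversely each allowed pair $(4l+3,4l+3)$, $(4l+1,4l+1)$, $(4l+1,4l-1)$ stays within one class, matching the paper's inductive peeling of $(\lb_1,\lb_2)$.
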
\begin{proof}
For all classical Lie algebras, $e$ is even if and only
if all the parts of $\boldsymbol{\lb}[e]$ have the same parity.

1) The proof for $\slv$ is quite simple. By \cite[Theorem\,2.3]{aif99},
$\htt(e)=2(\lb_1-1)$; in particular, the height of any nilpotent element is even.
If $\ede$ is divisible, then $e$ is even and
all parts of $\boldsymbol{\lb}[e]$ have the same parity. Since
$\htt(\edva)=\frac{1}{2}\htt(e)$ should also be even,
$\lb_1$ must be odd.

Conversely, if all $\lb_i$'s are odd, then we set $\edva :=e^2$,
the usual matrix power. 
First, it is easily seen that  $[h/2,e^2]=2e^2$ whenever $[h,e]=2e$; 
second,  one readily verifies that 
$h/2\in\Ima(\ad(e^2))$ if and only if all $\lb$'s are odd.
(This can be done for each Jordan block $\BV[i]$ separately.)
Thus, $h/2$ is a characteristic of $e^2$.
Finally, under the passage $e\mapsto e^2$,
every Jordan block of size $2k+1$ is replaced
with two blocks of size $k+1$ and $k$.

2) For $\g=\spv$, the partitions $\boldsymbol{\lb}[e]$ are characterized by the property
that each part of odd size occurs an even number of times. 
Since $\htt(e)$ is given by the same formula as in 1), the necessity is obtained 
analogously.

Conversely, suppose that all $\lb_i$'s are odd. We cannot merely take 
$\edva=e^2$, since $e^2\not\in\spv$. However, the procedure can slightly be adjusted. 
In our setting, each part of $\boldsymbol{\lb}[e]$
occurs an even number of times and hence $\dim\BV[2i{-}1]=\dim\BV[2i]$.
Since $\dim \BV[i]$ is odd for all $i$, the skew-symmetric form $\Phi$ vanishes
on every  $\BV[i]$. 
However, one can arrange the decomposition \eqref{eq:razlozh}
such that, for each pair of indices $(2i{-}1,2i)$, 
$\Phi$ is non-degenerate on $\BV[2i{-}1]\oplus\BV[2i]$. Then
it suffices to define $\edva$ separately on each sum of this form.
That is, without loss of generality, we may assume that 
$\boldsymbol{\lb}[e]=(2l+1,2l+1)$  and $\BV=\BV[1]\oplus\BV[2]$.
Now, define $\edva$ as follows:
\[
    \edva\vert_{\BV[1]}=e^2\ \text{ and }\ \edva\vert_{\BV[2]}=-e^2\ .
\]
A straightforward verification shows that $\edva\in\spv$ and $h/2$ is a 
characteristic of $\edva$.

3)  For $\g=\sov$, the partitions $\boldsymbol{\lb}[e]$ are characterized by the property
that each part of even size occurs an even number of times. 
Here we have \cite[Theorem\,2.3]{aif99}:
\[
\htt(e)=\left\{ \begin{array}{ll} 
\lb_1+\lb_2-2, & \text{ if\quad } \lb_2\ge \lb_1-1 \\ 
2\lb_1-4, & \text{ if\quad } \lb_2\le \lb_1-2\, . \end{array} \right. 
\]
In particular, either $\htt(e)$ is even or $\htt(e)\equiv\! 3\pmod 4$.

 \textbullet \quad Suppose that $\ede$ is divisible.
If $\lb_1$ is even, then all parts are even, i.e.,
$\boldsymbol{\lb}[e]$ is a {\it very even\/}
partition, and we are in the type $\GR{D}{}$ case.
Associated to a very even partition, one has two nilpotent orbits
whose weighted Dynkin diagrams differ only at the "very end".
Namely, 
$\eus D_1{=}\ast\cdots \ast\,\text{\begin{tabular}{@{}c@{}}
\lower3.5ex\vbox{\hbox{{2}\rule{0ex}{2.5ex}}
\hbox{\hspace{0.4ex}\rule{0ex}{1ex}\rule{0ex}{1.4ex}}\hbox{{0}\strut}}
\end{tabular}}$ and 
$\eus D_2{=}\ast\cdots\ast$\,\begin{tabular}{@{}c@{}}
\lower3.5ex\vbox{\hbox{{0}\rule{0ex}{2.5ex}}
\hbox{\hspace{0.4ex}\rule{0ex}{1ex}\rule{0ex}{1.4ex}}\hbox{{2}\strut}}
\end{tabular}\, . 
If such a $\eus D_i$ were divisible, then 
$\ast\cdots \ast$\,\begin{tabular}{@{}c@{}}
\lower3.5ex\vbox{\hbox{{1}\rule{0ex}{2.5ex}}
\hbox{\hspace{0.4ex}\rule{0ex}{1ex}\rule{0ex}{1.4ex}}\hbox{{0}\strut}}
\end{tabular} \ 
would be a weighted Dynkin diagram, too. But this is impossible,
because the sum of two last marks is always even in the $\GR{D}{}$-case~\cite[IV-2.32]{ss}.
Hence all $\lb_i$ must be odd.

For  $\lb_i=2m_i+1$, the $h$-eigenvalues on $\BV[i]$
are $\{2m_i,2m_i{-}2,\ldots,-2m_i\}$ and hence the $(h/2)$-eigenvalues   
are $\{m_i,m_i{-}1,\ldots,-m_i\}$. If $h/2$ is again the semisimple element of an 
$\tri$-triple, then the resulting set of eigenvalues on $\BV$ corresponds to the Jordan 
normal form, where each block of size $2m_i{+}1$ is replaced with
two blocks of sizes $m_i{+}1$ and $m_i$. (The structure of 
$\BV$ as  $\tri$-module is fully determined by the eigenvalues of the 
semisimple element.) Hence,  the Jordan 
normal form of $\edva$ is uniquely determined by that  of $e$.
However, the resulting partition must be "orthogonal", which  leads 
precisely to the remaining conditions in (ii). Indeed, suppose that
$\lb_1=4m+3$. This yields parts $(2m+2,2m+1)$ in 
$\boldsymbol\lb[\edva]$. Since part $2m+2$ should occur an even number of times, we must
have $\lb_2=4m{+}3$. For $\lb_1=4m+1$ with $m>0$, we obtain parts 
$(2m+1,2m)$ in  $\boldsymbol\lb[\edva]$. 
Since  part $2m$ must occur an even number of times, we must
have $\lb_2\in\{4m{+}1, 4m{-}1\}$. Then splitting away the subspace 
$\BV[1]\oplus\BV[2]$, we argue by induction.

\textbullet \quad Conversely, suppose that $\boldsymbol{\lb}[e]$ satisfies all  
conditions in (ii).  Then the total number of parts that are greater than 1 is even.
If, say, $\lb_{2k}>1$ and $\lb_{2k+1}=1$, then we split $\BV$ into the direct sum of
spaces $\BV_j$, $j=1,\dots,k+1$, where $\BV_j:=\BV[2j{-}1]\oplus\BV[2j]$ for $j\le k$ and
$\BV_{k+1}$ is the sum all Jordan blocks of size 1. In other words, 
$\BV_{k+1}\subset \BV$ is the fixed-point subspace of the algebra $\langle e,h,f\rangle$.
Without loss of generality, we may assume that 
$\Phi\vert_{\BV_j}$ is non-degenerate for all $j$.
We shouldn't do anything with $\BV_{k+1}$, and all other $\BV_j$ can be treated
separately.  Therefore, we may assume that $k=1$.
Now, there are two possibilities.

(a) If $\lb_1=\lb_2$, then we can argue as for $\spv$.  Since $\dim\BV[1]=\dim\BV[2]$,
it can be arranged that both $\BV[1]$ and $\BV[2]$ are isotropic with respect to 
$\Phi$. Then we set
\[
    \edva\vert_{\BV[1]}=e^2\ \text{ and }\ \edva\vert_{\BV[2]}=-e^2\ .
\]
A straightforward verification shows that $\edva\in\sov$ and $h/2$ is a 
characteristic of $\edva$.

(b) Assume that $\lb_1=4m+1$ and $\lb_2=4m-1$. This is the most interesting 
case, because now $\edva$ will not preserve the Jordan blocks of $e$.
Here $\Phi$ is non-degenerate on both $\BV[1]$ and $\BV[2]$.
Let $\{v_i\mid i=1,\ldots,4m+1\}$ be a basis for $\BV[1]$ and
$\{w_i\mid i=2,\ldots,4m\}$  a basis for $\BV[2]$.
Without loss of generality, one may assume that 
$e(v_i)=v_{i+1}$, $e(w_i)=w_{i+1}$, $\Phi(v_i,v_{4m+2-j})=(-1)^{i-1}\delta_{i,j}$,
and  $\Phi(w_j,w_{4m-i})=(-1)^j\delta_{i,j}$. 
Define $\edva\in\glv$ as follows:
\[  \edva: \left\{ \begin{array}{c}
v_1\mapsto -w_3\mapsto v_5 \mapsto \ldots \mapsto -w_{4m-1}\mapsto v_{4m+1}\mapsto 0 \\
v_2\mapsto -w_4\mapsto v_6 \mapsto \ldots \mapsto -w_{4m}  \mapsto 0 \\
w_2\mapsto -v_4\mapsto w_6 \mapsto \ldots \mapsto -v_{4m}  \mapsto 0 \\
v_3\mapsto -w_5\mapsto v_7 \mapsto \ldots \mapsto -w_{4m-3}\mapsto v_{4m-1}\mapsto 0 
\end{array}\right.
\]
Then $\boldsymbol\lb[\edva]=(2m+1,2m,2m,2m-1)$.
It is not hard to check that $\edva\in \sov$ and $h/2$ is a characteristic of $\edva$.
\end{proof}%
{\bf Warning.}  For a divisible $e\in\slv$, one can take $\edva=e^2$. However, 
this procedure may not simultaneously apply to $f$.
Given $e^2$ and $h/2$, the last element
of the $\tri$-triple is uniquely determined, but it is not necessarily a multiple of 
$f^2$. It is instructive to 
consider a regular nilpotent $e\in\mathfrak{sl}_5$.

\subsection{The exceptional cases}
If $\g$ is exceptional, then one can merely browse 
the list of the weighted Dynkin diagrams and pick the suitable pairs among them.
The output is presented  below. We use the standard notation for nilpotent
orbits in the exceptional Lie algebras that goes back to Dynkin and Bala--Carter
(see e.g. \cite[Ch.\,8]{CM}). 
The meaning of the first and last columns is explained in Section~\ref{sect:A2}.

\begin{longtable}{c|ccccc|}
\caption{The friendly pairs and divisible Dynkin diagrams in the exceptional algebras} 
\label{table_E}
\endfirsthead
\multicolumn{4}{l}{The friendly pairs, cont. } \\ \hline
& reachable & $G{\cdot}\edva$ & $G{\cdot}e$ & $\ede$ & $\GR{A}{2}$-pair  \\ \hline
\endhead
\endfoot    
 & reachable & $G{\cdot}\edva$ & $G{\cdot}e$ & $\ede$ & $\GR{A}{2}$-pair \\ \hline
\framebox{$\GR{E}{6}$} & + &
$\GR{A}{1}$  & $\GR{A}{2}$ & \begin{E6}{0}{0}{0}{0}{0}{2}
\end{E6} &{\bf\color{green} +} \\
& + & $2\GR{A}{1}$  & $2\GR{A}{2}$ & \begin{E6}{2}{0}{0}{0}{2}{0}
\end{E6} & {\bf\color{green} +} \\
& + & $3\GR{A}{1}$  & $\GR{D}{4}(a_1)$ & \begin{E6}{0}{0}{2}{0}{0}{0}
\end{E6}  & {\bf\color{green} +} \\
& + & $\GR{A}{2}+\GR{A}{1}$  & $\GR{A}{4}$  & \begin{E6}{2}{0}{0}{0}{2}{2}
\end{E6} & {\bf\color{green} +} \\
& + & $2\GR{A}{2}{+}\GR{A}{1}$  & $\GR{E}{6}(a_3)$ & \begin{E6}{2}{0}{2}{0}{2}{0}
\end{E6} & {\bf\color{green} +} \\
& -- & $\GR{A}{4}+\GR{A}{1}$  & $\GR{E}{6}(a_1)$  & \begin{E6}{2}{2}{0}{2}{2}{2}
\end{E6}  & {\bf\color{green} --}\\ \hline
\framebox{$\GR{E}{7}$} 
& + & $\GR{A}{1}$  & $\GR{A}{2}$ & \begin{E7}{0}{0}{0}{0}{0}{2}{0}\end{E7} & {\bf\color{green} +}  \\
& + &$2\GR{A}{1}$ & $2\GR{A}{2}$ & \begin{E7}{0}{2}{0}{0}{0}{0}{0}\end{E7} & 
{\bf\color{green} +}  \\
& + &$(3\GR{A}{1})'$  & $\GR{D}{4}(a_1)$ & \begin{E7}{0}{0}{0}{0}{2}{0}{0}\end{E7} & 
{\bf\color{green} +}  \\
& + &$\GR{A}{2}{+}\GR{A}{1}$  & $\GR{A}{4}$ & \begin{E7}{0}{2}{0}{0}{0}{2}{0}\end{E7} & 
{\bf\color{green} +} \\
& + & $\GR{A}{2}{+}2\GR{A}{1}$  & $\GR{A}{4}{+}\GR{A}{2}$ & \begin{E7}{0}{0}{0}{2}{0}{0}{0}
\end{E7} & {\bf\color{green} +}  \\
& + & $2\GR{A}{2}{+}\GR{A}{1}$ & $\GR{E}{6}(a_3)$ & 
\begin{E7}{0}{2}{0}{0}{2}{0}{0}\end{E7} & {\bf\color{green} +} \\
& -- & $\GR{A}{3}{+}\GR{A}{2}$  & $\GR{A}{6}$ & \begin{E7}{0}{2}{0}{2}{0}{0}{0}\end{E7} & 
{\bf\color{green} --} \\
& + &$\GR{A}{4}{+}\GR{A}{1}$  & $\GR{E}{6}(a_1)$ & \begin{E7}{0}{2}{0}{2}{0}{2}{0}\end{E7}  & 
{\bf\color{green} +}
\\ \hline
\framebox{$\GR{E}{8}$}
& +&  $ \GR{A}{1}$ & $ \GR{A}{2}$ & \begin{E8}{2}{0}{0}{0}{0}{0}{0}{0}\end{E8} & {\bf\color{green} +}\\
& +& $2\GR{A}{1}$ & $2\GR{A}{2}$ & \begin{E8}{0}{0}{0}{0}{0}{0}{2}{0}\end{E8} & {\bf\color{green} +} \\
& +& $3\GR{A}{1}$ & $\GR{D}{4}(a_1)$ & \begin{E8}{0}{2}{0}{0}{0}{0}{0}{0}\end{E8} &{\bf\color{green} +} \\
& + & $4\GR{A}{1}$ & $\GR{D}{4}(a_1){+}\GR{A}{2}$ & 
\begin{E8}{0}{0}{0}{0}{0}{0}{0}{2}\end{E8} & {\bf\color{green} +}\\
& + &$\GR{A}{2}{+}\GR{A}{1}$  & $\GR{A}{4}$ & \begin{E8}{2}{0}{0}{0}{0}{0}{2}{0}\end{E8} &{\bf\color{green} +} \\
& + &$\GR{A}{2}{+}2\GR{A}{1}$ & $\GR{A}{4}{+}\GR{A}{2}$ & 
\begin{E8}{0}{0}{2}{0}{0}{0}{0}{0}\end{E8} & {\bf\color{green} +}\\
& + &$2\GR{A}{2}{+}\GR{A}{1}$  & $\GR{E}{6}(a_3)$ & 
\begin{E8}{0}{2}{0}{0}{0}{0}{2}{0}\end{E8} & {\bf\color{green} +} \\
& + & $2\GR{A}{2}{+}2\GR{A}{1}$  & $\GR{E}{8}(a_7)$ & 
\begin{E8}{0}{0}{0}{2}{0}{0}{0}{0}\end{E8} & {\bf\color{green} +}\\  
&  -- & $\GR{A}{3}{+}\GR{A}{2}$  & $\GR{A}{6}$ & \begin{E8}{0}{0}{2}{0}{0}{0}{2}{0}\end{E8} & 
{\bf\color{green} --}\\
&  + & $\GR{A}{4}{+}\GR{A}{1}$  & $\GR{E}{6}(a_1)$ & 
\begin{E8}{2}{0}{2}{0}{0}{0}{2}{0}\end{E8} &{\bf\color{green} +} \\

& + & $\GR{A}{4}{+}2\GR{A}{1}$  & $\GR{E}{8}(b_6)$ & 
\begin{E8}{2}{0}{0}{0}{2}{0}{0}{0}\end{E8} & {\bf\color{green} --}\\

& + & $\GR{A}{4}{+}\GR{A}{3}$  & $\GR{E}{8}(a_6)$ & 
\begin{E8}{0}{2}{0}{0}{2}{0}{0}{0}\end{E8} & {\bf\color{green}--}\\

& -- & $\GR{D}{7}(a_2)$  & $\GR{E}{8}(a_4)$ & \begin{E8}{2}{0}{2}{0}{2}{0}{2}{0}\end{E8} & 
{\bf\color{green} --}\\
\hline
\framebox{$\GR{F}{4}$} &+ &$\GR{A}{1}$  &  \rule{0pt}{13pt} $\GR{A}{2}$ & 0--0$\Leftarrow$0--2 
& {\bf\color{green} +}\\
& + & $\GRt{A}{1}$  & \rule{0pt}{13pt} $\GRt{A}{2}$ & 2--0$\Leftarrow$0--0 & {\bf\color{green} +} \\
& + &$\GR{A}{1}{+}\GRt{A}{1}$  &  \rule{0pt}{13pt} $\GR{F}{4}(a_3)$ & 0--0$\Leftarrow$2--0  
& {\bf\color{green} +}\\
& --&$\GR{A}{1}{+}\GRt{A}{2}$  &  \rule{0pt}{13pt} $\GR{F}{4}(a_2)$ & 2--0$\Leftarrow$2--0 
&  {\bf\color{green} --}\\ \hline
\framebox{$\GR{G}{2}$}\rule{0pt}{13pt} & +
& $\GR{A}{1}$  & $\GR{G}{2}(a_1)$ & 0$\Lleftarrow$2 &{\bf\color{green} +} \\ 
\hline
\end{longtable}

\noindent
Recall that, for every orbit $\co=G{\cdot}e\subset\N$, any two minimal Levi subalgebras meeting 
$G{\cdot}e$  are $G$-conjugate \cite[Theorem\,8.1.1]{CM}. 
If $\el$ is such a minimal Levi subalgebra and $e\in\el$, then
the notation of Table~\ref{table_E}
represents the Cartan type of $\el$,  with some 
additional data (like $(a_i)$ or $(b_i)$) if the  orbit $L{\cdot}e$ in $\el$ is not regular. 
(See \cite[8.4]{CM} for the details.)  
If $\g$ itself is the minimal Levi subalgebra
meeting $\co$, then $\co$ is called {\it distinguished}. This is equivalent to that
$\g^e_{red}=\{0\}$ for $e\in\co$.
For instance,  the third row for $\GR{E}{6}$ contains the divisible orbit $G{\cdot}e$
denoted by
$\GR{D}{4}(a_1)$. This means that a minimal Levi subalgebra, $\el$,  meeting $G{\cdot}e$
is of type $\GR{D}{4}$ and the intersection $\el\cap G{\cdot}e$ is the distinguished
$SO_8$-orbit, which is called $\GR{D}{4}(a_1)$. In fact, it is 
the subregular nilpotent orbit in $\mathfrak{so}_8$, and its partition is $(5,3)$.

\begin{thm}   \label{thm:divis-Levi}
Let $\el$ be a minimal Levi subalgebra of $\g$ containing $e$.
Then $G{\cdot}e$ is divisible if and only if $L{\cdot}e$ is.
\end{thm}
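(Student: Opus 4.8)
The plan is to prove the two implications separately. The forward implication ``$L{\cdot}e$ divisible $\Rightarrow G{\cdot}e$ divisible'' is general and uses no minimality of $\el$, so I treat it first. Suppose $L{\cdot}e$ is divisible; then $h/2$ is a characteristic of some nilpotent $\edva\in\el$, and $\{\edva,h/2,f^{\ltr}\}\subset\el\subseteq\g$ is an $\tri$-triple. Hence $h/2$ is a characteristic of a nilpotent element of $\g$, so the eigenvalues of $\ad h=2\ad(h/2)$ on $\g$ are even and $e$ is even in $\g$; as $h$ is a characteristic of $e$, both $h$ and $h/2$ are characteristics, and $G{\cdot}e$ is divisible by the equivalent description in the Introduction. (This is the mechanism of Remark~\ref{es}, applied to $\es=[\el,\el]\ni e$.) The minimality of $\el$ will be needed only for the converse.

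For the converse I would fix the minimal Levi explicitly: given $\{e,h,f\}$, let $\te_\ka$ be a Cartan subalgebra of $\ka=\g^e_{red}=\g^e(0)$ and put $\el=\g^{\te_\ka}$, so that $\{e,h,f\}\subset\el$ and $L{\cdot}e$ is distinguished in $\el$. Assume now that $G{\cdot}e$ is divisible; then $e$ is even in $\g$, hence even in $\el$ because $\el(i)=\el\cap\g(i)$. Thus divisibility of $L{\cdot}e$ reduces to producing a single nilpotent $\edva\in\el(4)$ with $h/2\in[\edva,\el]$, for then $[h/2,\edva]=2\edva$ holds automatically and $h/2$ becomes a characteristic of $\edva$ inside $\el$ (cf. Proposition~\ref{trivial}).

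The decisive observation is that, for the classical algebras, the element $\edva$ built in the proof of Theorem~\ref{sl+sp+so} already lies in $\el$ together with a completing $f^{\ltr}$. Indeed, with $\te_\ka$ chosen as above the torus separates the Jordan structure into exactly the pieces used in that proof: for $\slv$ each simple factor of $\el$ is a single Jordan block and $\edva=e^2$ preserves it; for $\spv$, and for case~(a) of $\sov$, each factor is a pair of equal odd blocks $\BV[2i{-}1]\oplus\BV[2i]$ on which $\edva=\pm e^2$ acts; and case~(b) of $\sov$, with parts $4m{+}1$ and $4m{-}1$, is precisely a distinguished orthogonal factor $\mathfrak{so}(\BV_j)$ of $\el$ on which the explicit $\edva$ was defined. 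In each factor Theorem~\ref{sl+sp+so} already checks that $h/2$ is the Dynkin characteristic of $\edva$, so Jacobson--Morozov applied inside that factor supplies $f^{\ltr}\in\el$; hence $h/2\in[\edva,\el]$ and $L{\cdot}e$ is divisible. For the exceptional algebras the converse is a finite verification against Table~\ref{table_E}: for each divisible $G{\cdot}e$ the Bala--Carter label names both $\el$ and the distinguished orbit $L{\cdot}e$, and one checks that the latter is divisible, either from the relevant smaller block of Table~\ref{table_E} (when $\el$ is exceptional) or from Theorem~\ref{sl+sp+so} (when $\el$ is classical).

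The main obstacle is the converse, and inside it the subtle point is not that $\edva$ can be taken in $\el$ but that $h/2$ is the genuine \emph{characteristic} of $\edva$ in $\el$, i.e.\ that the lowering element $f^{\ltr}$ lands in $\el$ and not merely in $\g$. This is exactly what the block-by-block construction of Theorem~\ref{sl+sp+so} guarantees, since each block supplies a complete $\tri$-triple for $\edva$ within one simple factor of $\el$, making the condition $h/2\in[\edva,\el]$ hold factor-wise and hence globally.
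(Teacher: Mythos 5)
Your proposal is correct and takes essentially the same route as the paper: the easy direction is Remark~\ref{es} applied to $[\el,\el]$, and the converse is the same case-by-case reduction through the classification, identifying $[\el,\el]$ as a sum of $\GR{A}{\lb_i-1}$-factors on which $e$ is regular (plus a residual distinguished orthogonal part for $\sov$) and reading off the exceptional cases from Table~\ref{table_E}. The only cosmetic difference is that where the paper applies the partition criterion of Theorem~\ref{sl+sp+so} abstractly to each factor of $\el$, you verify instead that the explicit witness $\edva$ (with its completing $f^{\ltr}$) from that theorem's constructive proof already lies in $\el$ — the same classification doing the same work, so no comparison beyond this is needed (your phrase ``a distinguished orthogonal factor $\mathfrak{so}(\BV_j)$ of $\el$'' should say subalgebra of the single orthogonal factor $\mathfrak{so}(\BV')$, since a Levi of $\sov$ has at most one orthogonal factor, but this does not affect the argument).
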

\begin{proof}
We have only to prove that if $G{\cdot}e$ is divisible, then so is $L{\cdot}e$.
In other words, if $G{\cdot}e$ is divisible, then $G{\cdot}\edva\cap \el\ne\varnothing$.
Our case-by-case proof is based on the previous classification. 
I hope there is a better proof.

1. $\g=\slv$. If $\boldsymbol\lb[e]=(\lb_1,\dots,\lb_n)$, then $[\el,\el]$
is of type $\GR{A}{\lb_1-1}+\dots+\GR{A}{\lb_n-1}$, and
the component of $e$ in each summand is
a regular nilpotent element there.
By Theorem~\ref{sl+sp+so}(i), the regular nilpotent orbit in $\GR{A}{m}$ is divisible
if and only if $m$ is even.

2. $\g=\spv$. If $G{\cdot}e$ is divisible, then $\boldsymbol\lb[e]=(\nu_1^{2k_1},\dots,
\nu_m^{2k_m})$, where $\nu_1> \dots > \nu_m>0$ and all $\nu_i$ are odd.
Here $[\el,\el]$ is of type 
$k_1\GR{A}{\nu_1-1}+\dots+k_m\GR{A}{\nu_m-1}$, and the rest is the same as in part 1.

3. $\g=\sov$. Recall that  $e\in\sov$ is distinguished if and only if
all parts of $\boldsymbol\lb[e]$ are  different (and hence odd).
Suppose $G{\cdot}e$ is divisible, i.e., $\boldsymbol\lb[e]$ satisfies the conditions of 
Theorem~\ref{sl+sp+so}(ii).
Then $\boldsymbol\lb[e]$ may have repeating odd parts.  
Each pair of equal parts in  $\boldsymbol\lb[e]$ determines a
summand of type $\GR{A}{\lb_i-1}$ in $\el$, and the projection of $e$
to this summand is regular nilpotent.
Discarding all  pairs of equal parts (if any), we get
a partition of the form $(4l_1+1,4l_1-1,\dots,4l_m+1,4l_m-1, (1))$, where 
$l_1>l_2>\ldots >l_m>0$ and the last "1" is optional (it occurs if and only if $\dim\BV$ is odd).
The remaining partition represents a (distinguished) divisible orbit in 
$\mathfrak{so}(\BV')\subset \sov$. Note that  $\dim\BV-\dim \BV'$ is  even, hence 
$\mathfrak{so}(\BV')$ is the derived algebra of a Levi subalgebra of $\sov$.

4. For $\g$ exceptional,  it suffices to understand information encoded in column
"$G{\cdot}e$" in Table~\ref{table_E}
(see explanations above). For instance, the last divisible orbit for $\g=\GR{E}{7}$
is called $\GR{E}{6}(a_1)$. This means that  $[\el,\el]$ is of type
$\GR{E}{6}$ and the corresponding distinguished $\GR{E}{6}$-orbit  is  
$\GR{E}{6}(a_1)$. Now, the last item in the $\GR{E}{6}$-part of the
table shows that this orbit is also divisible.  If $[\el,\el]$ is of classical type, then one should again use Theorem~\ref{sl+sp+so}.
\end{proof}

\begin{rmk}
Since $\dim\g(2)> \dim\g(4)$, we have $\dim\g^e_{red} < \dim \g^{\edva}_{red}$. 
Moreover, $\edva=e^2$ for $\g=\slv$, and therefore $\slv^e\subset \slv^{\edva}$ and
$\slv^e_{red}\subset \slv^{\edva}_{red}$.
This does not mean, however, that  the inclusion $\g^e_{red} \subset \g^{\edva}_{red}$ 
always holds for a suitable choice of $\edva$.
For instance, for the divisible orbit $\GRt{A}{2}$ in $\g=\GR{F}{4}$, one has 
$\g^e_{red}=\GR{G}{2}$ and $ \g^{\edva}_{red}=\GR{A}{3}$. 
Recall that a minimal Levi subalgebra $\el$ meeting $G{\cdot}e$ is obtained as follows:
If $\h$ is a Cartan subalgebra of $\g^e(0)$, then $\el=\z_\g(\h)$ \cite[Ch.\,8]{CM}.
Consequently, 
Theorem~\ref{thm:divis-Levi} is equivalent to the assertion that 
a Cartan subalgebra of $\g^e(0)=\g^e_{red}$ is contained in a Cartan subalgebra of $\g^{\edva}(0)=\g^{\edva}_{red}$.  
This also implies that $\rk(\g^e_{red})<\rk(\g^{\edva}_{red})$.
\end{rmk}

\section{$\GR{A}{2}$-pairs of orbits and reachable elements}  
\label{sect:A2}

\noindent
In this section, an interesting class of friendly pairs is studied. 

\begin{df}   \label{def:pair}
A pair of nilpotent orbits $(\tilde\co,\co)$ is said to be an  $\GR{A}{2}$-{\it pair\/}, 
if there is a simple subalgebra $\sltri\subset\g$  such that $\tilde\co\cap\sltri$  is the 
regular nilpotent orbit  and $\co\cap\sltri$  is the minimal nilpotent orbit in $\sltri$.
Then $\tilde\co$ (resp. $\co$) is called an {\it upper\/}-$\GR{A}{2}$ 
(resp. {\it low\/}-$\GR{A}{2}$) {\it orbit}. 
\end{df}

\noindent
The property of being an $\GR{A}{2}$-pair imposes strong constraints
on both orbits, so that there are only a few $\GR{A}{2}$-pairs in simple Lie algebras.

We say that $e\in\N$ (or the orbit $G{\cdot}e$) is {\it reachable\/}, if 
$e\in [\g^e, \g^e]$. This property was first considered in 
\cite{EG}, where such nilpotent elements are called "compact". 
Some further results are obtained in \cite{reach}.

\begin{lm}
Let $(\tilde\co, \co)$ be an $\GR{A}{2}$-pair. Then it is a friendly pair
(i.e., $\tilde\co$ is divisible and $\co=\tilde\co^{\langle 2\rangle}$)  and $\co$ is reachable.
\end{lm}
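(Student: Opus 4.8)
The plan is to verify the two claims---that $(\tilde\co,\co)$ is a friendly pair and that $\co$ is reachable---by exploiting the explicit structure inside the copy of $\sltri$ and then transporting the relevant data to $\g$. First I would fix an $\sltri$-triple adapted to the pair. Since $\co\cap\sltri$ is the minimal orbit of $\sltri$, I take $e\in\co\cap\sltri$ to be a root vector of $\sltri$, say the lowest-root (minimal) nilpotent, and complete it to an $\tri$-triple $\{e,h,f\}$ inside $\sltri$. The key is that the principal nilpotent $\tilde e\in\tilde\co\cap\sltri$ and the minimal nilpotent $e$ of $\sltri$ have characteristics that differ exactly by a factor of $2$: in $\sltri$ the principal characteristic $\tilde h$ and the minimal characteristic $h$ satisfy $\tilde h = 2h$ (the weighted Dynkin diagram of the principal orbit of $\GR{A}{2}$ is $2$--$2$, that of the minimal orbit is $1$--$1$). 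This is the arithmetic heart of the statement and is immediate from the representation theory of $\tri$ restricted along the principal $\tri\hookrightarrow\sltri$.

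Next I would argue that these local $\sltri$-characteristics are genuine $\g$-characteristics realizing the divisibility. By definition $h$ is a characteristic of $e$ in $\g$ and $\tilde h$ is a characteristic of $\tilde e$ in $\g$. Since $\tilde h = 2h$, the relation $h = \tilde h/2$ shows that $h$ is a characteristic of a nilpotent element (namely $e$) whose characteristic is one-half that of $\tilde e$. By the definition of divisibility, $\tilde\co=G{\cdot}\tilde e$ is divisible and $\co=G{\cdot}e$ is the corresponding orbit $\tilde\co^{\langle 2\rangle}$. One should check that the grading convention matches: in the notation of Section~\ref{sect:gen-prop}, $\edva$ should lie in $\g(4)$ with characteristic $h/2$; here the roles are arranged so that $\tilde e$ plays the role of the ``undivided'' element with characteristic $\tilde h$ and $e$ plays the role of $\tilde e^{\langle 2\rangle}$ with characteristic $\tilde h/2 = h$. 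This bookkeeping is routine but must be stated carefully so the divisibility matches the Definition verbatim.

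For reachability, the point is purely internal to $\sltri$. The minimal nilpotent $e$ of $\sltri$ is reachable \emph{inside} $\sltri$, i.e.\ $e\in[\sltri^e,\sltri^e]$, because in $\sltri\cong\mathfrak{sl}_3$ the centralizer of a minimal (rank-one) nilpotent is large enough that the minimal nilpotent is a commutator---concretely, writing $e=E_{13}$ (a highest-root vector up to convention), one has $E_{12},E_{23}\in\sltri^e$ and $[E_{12},E_{23}]=E_{13}=e$, so $e\in[\sltri^e,\sltri^e]$. Since $\sltri\subset\g$, the centralizer containment $\sltri^e\subset\g^e$ gives $[\sltri^e,\sltri^e]\subset[\g^e,\g^e]$, whence $e\in[\g^e,\g^e]$, i.e.\ $\co$ is reachable. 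This is exactly the observation flagged in the Introduction (``because this holds inside $\sltri$'').

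I expect the main obstacle to be not the existence argument but the convention-matching: one must confirm that the characteristic $h$ produced from the minimal $\sltri$-triple is $G$-conjugate to the element called $h/2$ in the divisibility framework, and that $e\in\g(4)$ (rather than some other graded piece) under the grading defined by $\tilde h$. This follows because the restriction of the $\tilde h$-grading to $\sltri$ has the minimal nilpotent $e$ sitting in the top graded piece $\sltri(4)\subset\g(4)$---indeed the principal $\tri$ gives eigenvalues $\{-4,-2,0,2,4\}$ on the adjoint $\sltri$, with $e$ a highest weight vector of eigenvalue $4$. Once this placement is checked, both assertions drop out; the verification is short but is the one step where an incorrect normalization would silently break the correspondence with Definition~\ref{def:pair} and the preceding Definition of divisibility.
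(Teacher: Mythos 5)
Your proposal is correct and takes essentially the same approach as the paper: the paper's proof simply notes that both properties ``obviously hold'' for the two orbits inside $\sltri$ and transports them to $\g$, which is exactly your argument. You merely make explicit the verifications the paper leaves implicit---that the principal characteristic is twice the minimal one ($\tilde h=2h$, diagrams $2$--$2$ versus $1$--$1$), that $e=e_\theta$ sits in $\g(4)$ for the $\tilde h$-grading, and the commutator $[E_{12},E_{23}]=E_{13}$ giving $e\in[\sltri^e,\sltri^e]\subset[\g^e,\g^e]$.
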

\begin{proof}
The required properties obviously hold for two orbits in $\sltri$.
This implies the assertion for orbits in $\g$.
\end{proof}%

\noindent
Reachable nilpotent elements (orbits) have some intriguing properties that are not fully understood yet.
For instance, explicit classification shows that $\co\subset \N$ is reachable if and
only if $\codim_{\ov{\co}} (\ov{\co}\setminus\co)\ge 4$ \cite{EG}. 
It is a challenging task to find an {\sl a priori\/} 
relationship between two such different properties. 
In \cite[4.6]{reach}, we posed the following question:
\\[.7ex]  \centerline{$(\diamondsuit)$ \hfil
{\sl Is it true that if $e\in\N$ is reachable, then $\g^e_{nil}$ 
is generated as Lie algebra by  $\g^e(1)$?}}
\\[.7ex]
This was proved for $\g=\slv$ \cite[Theorem\,4.5]{reach}.
Below,  we prove a stronger assertion for reachable orbits occurring as 
low-$\GR{A}{2}$ orbits (Theorem~\ref{thm:reach}). 
To this end, we need some notation and results on 
$\sltri$.

Fix  a triangular decomposition $\ut_-\oplus\te\oplus\ut=\sltri$. 
Let $\ap_1,\ap_2,\ap_1+\ap_2=\theta$ be the positive roots of $\sltri$ and 
$e_1,e_2, e=e_\theta$ the corresponding root vectors in $\ut$.
Then $\tilde e=e_1+e_2$ is  a regular nilpotent element and $\etdva=e$.
Let $h\in \te$ be the characteristic of $\tilde e$  (and hence $h/2$ is a characteristic of $e$).
Let $U$ be the maximal unipotent  subgroup of $SL_3$ corresponding to 
$\ut$ and $U_\theta$ the root  subgroup corresponding to $\theta$. Then $U/U_\theta
\simeq (\mathbb G_a)^2$ is commutative.

Let $\varpi_i$ be the fundamental weight coresponding to $\ap_i$.
The simple  $SL_3$-module with highest weight $a\varpi_1+
b\varpi_2$ ($a,b\ge 0$) is denoted by $\mathsf R(a,b)$. 
Let $\esi_1,\esi_2,\esi_3$ be the  $T$-weights of $\mathsf R(1,0)$ such that 
$\ap_1=\esi_1-\esi_2$ and $\ap_2=\esi_2-\esi_3$.

As is well known,  $SL_3/U$ is quasi-affine and 
$\bbk[SL_3/U]$ is a
{\it model algebra}, i.e., each finite-dimensional simple $SL_3$-module occurs
exactly once in it.  Set $X:=\spe(\bbk[SL_3/U])$. It is an affine $SL_3$-variety containing  
$SL_3/U$ as a dense open subset.
One can explicitly realise $X$   
as a subvariety in $\mathsf R(1,0)\oplus \mathsf R(0,1)$, 
the sum of the fundamental representations. 
(This is also true for  an arbitrary semisimple $G$ in place of $SL_3$ \cite{VP}.) 
Since $\dim X=5$, it is a hypersurface in $\mathsf R(1,0)\oplus \mathsf R(0,1)$.
Let $\ah$ be the simple three-dimensional subalgebra of $\sltri$ containing $e$
and $h/2$.

\begin{thm}   \label{thm:ab}
{\sf (i)} \ $\bbk[SL_3/U]^{U_\theta}$ is a  polynomial algebra 
of Krull dimension $4$ whose free generators can be explicitly described.

{\sf (ii)} \  For any $(a,b)\in \BN^2$, $\mathsf R(a,b)^{U_\theta}$ is a cyclic  $U/U_\theta$-module of dimension $(a+1)(b+1)$.  More precisely, there is a unique (up to a multiple)
cyclic vector that is a $T$-eigenvector.

{\sf (iii)} \  The branching rule $\sltri\downarrow\ah$ is given by the formula (for $a\ge b$)
\[
    \mathsf R(a,b)\vert_\ah=\mathsf R_0\oplus 2\mathsf R_1\oplus \dots \oplus 
    (b+1)\mathsf R_b\oplus 
    \dots \oplus (b+1)\mathsf R_a\oplus b\mathsf R_{a+1}\oplus \dots \oplus \mathsf R_{a+b},
\]
where $\mathsf R_n$ is the simple $\ah$-module of dimension $n+1$. The cyclic 
vector from (ii) lies in the unique 1-dimensional submodule 
$\mathsf R_0\subset \mathsf R(a,b)$.
\end{thm}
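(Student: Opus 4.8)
The plan is to carry out every computation inside the explicit affine model $X=\spe(\bbk[SL_3/U])$. I would realise $X$ as the orbit closure $\overline{SL_3{\cdot}(v,\omega)}\subset \mathsf R(1,0)\oplus\mathsf R(0,1)$ of the pair of highest weight vectors $v,\omega$; by \cite{VP} this is exactly the hypersurface cut out by the unique $SL_3$-invariant pairing $\mathsf R(1,0)\otimes\mathsf R(0,1)\to\bbk$. Choosing coordinates $x_1,x_2,x_3$ on $\mathsf R(1,0)$ (of weights $-\esi_1,-\esi_2,-\esi_3$) and $y_1,y_2,y_3$ on $\mathsf R(0,1)$ (of weights $\esi_1,\esi_2,\esi_3$), one gets $\bbk[X]=\bbk[x_1,\dots,y_3]/(x_1y_1+x_2y_2+x_3y_3)$, a normal quadric of dimension $5$, on which the infinitesimal action of $U_\theta$ is the locally nilpotent derivation $D=e_\theta=-x_3\partial_{x_1}+y_1\partial_{y_3}$, so that $\bbk[X]^{U_\theta}=\ker D$. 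For (i) I would compute $\ker D$ by a local slice: on the ambient polynomial ring $\ker D=\bbk[x_2,x_3,y_1,y_2,P]$ with $P=x_1y_1+x_3y_3$, and localising at $x_3$ (slice $-x_1/x_3$) rules out further invariants. On $X$ the defining relation reads $P=-x_2y_2$, so $P$ becomes redundant and $\bbk[X]^{U_\theta}=\bbk[x_2,x_3,y_1,y_2]$, a polynomial ring of Krull dimension $4$ with the displayed free generators.

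For (ii), I would use the $\BN^2$-grading of $\bbk[X]$ by $(\deg_x,\deg_y)$, which commutes with $SL_3$. A Clebsch--Gordan computation (equivalently, the model-algebra property) identifies the bidegree-$(b,a)$ component with the single copy $\mathsf R(a,b)$, its highest weight vector being $x_3^{b}y_1^{a}$. Hence $\mathsf R(a,b)^{U_\theta}$ is the bidegree-$(b,a)$ part of $\bbk[x_2,x_3,y_1,y_2]$, spanned by the $(a+1)(b+1)$ monomials $x_2^{i}x_3^{j}y_1^{k}y_2^{l}$ with $i+j=b$ and $k+l=a$. On this ring the commuting generators of $U/U_\theta$ act as $e_1=y_1\partial_{y_2}$ and $e_2=-x_3\partial_{x_2}$; the $x$-monomials form a single cyclic $\langle e_2\rangle$-string with top $x_2^{b}$ and the $y$-monomials a single cyclic $\langle e_1\rangle$-string with top $y_2^{a}$. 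Therefore $x_2^{b}y_2^{a}$ generates the whole space, and it is the only monomial (hence the only $T$-eigenvector up to a scalar) that does so, since $e_1,e_2$ can never raise the $x_2$- or $y_2$-power.

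For (iii), I would read the branching off the $\theta^\vee$-weights of these invariants. An $\ah$-highest weight vector is precisely a $U_\theta$-invariant, so the multiplicity of $\mathsf R_n$ in $\mathsf R(a,b)\vert_\ah$ equals the dimension of the $\theta^\vee$-eigenspace of eigenvalue $n$ in $\mathsf R(a,b)^{U_\theta}$. Since $\theta^\vee=h/2$ acts with eigenvalue $j+k$ on $x_2^{i}x_3^{j}y_1^{k}y_2^{l}$, this multiplicity is $\#\{(j,k):0\le j\le b,\ 0\le k\le a,\ j+k=n\}=\min(b,n)-\max(0,n-a)+1$, which gives exactly the stated formula for $a\ge b$. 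The cyclic vector $x_2^{b}y_2^{a}$ has $j=k=0$, hence $\theta^\vee$-eigenvalue $0$, and is killed by $e_\theta$; so it spans the unique (multiplicity-one) trivial submodule $\mathsf R_0$.

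The main obstacle is the first step: fixing the equivariant hypersurface model and, inside it, proving that the invariants of the non-reductive group $U_\theta$ are \emph{exactly} $\bbk[x_2,x_3,y_1,y_2]$ — the passage from the ambient $\ker D$ to $\ker(D\vert_{\bbk[X]})$ is where one must exclude extra invariants, and the local-slice argument is what makes this rigorous. Once the model and the identification of the bidegree-$(b,a)$ component with $\mathsf R(a,b)$ are in place, (ii) and (iii) are bookkeeping with monomials and $\theta^\vee$-weights. Alternatively, (i) and (ii) follow directly from the structure theorem for $(U,U)$-invariants \cite[Theorem\,1.6]{odno-sv}, since $U_\theta=(U,U)$ for $SL_3$; but I would still want the explicit monomial basis above to perform the weight count in (iii).
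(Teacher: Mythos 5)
Your proposal is correct, and its overall architecture coincides with the paper's: both realise $X=\spe(\bbk[SL_3/U])$ as the quadric hypersurface in $\mathsf R(1,0)\oplus\mathsf R(0,1)$, identify four coordinate functions as $U_\theta$-invariants (your $x_2,x_3,y_1,y_2$ are the paper's $x_1,x_2,\xi_2,\xi_3$ under the dual weight convention), read off $\mathsf R(a,b)^{U_\theta}$ as the $(a+1)\times(b+1)$ rectangle of monomials via the bigrading, let $e_1,e_2$ act along the rows and columns of that rectangle to exhibit the corner monomial as cyclic vector, and count $h/2=\theta^\vee$-eigenvalues of the monomials to get the branching multiplicities in (iii); your count $\min(b,n)-\max(0,n-a)+1$ agrees with the stated coefficients. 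The one genuine divergence is how the key equality $\bbk[X]^{U_\theta}=\bbk[x_2,x_3,y_1,y_2]$ in (i) is established. The paper does it in one line: the polynomial subalgebra has Krull dimension $4=\dim X-\dim U_\theta$, which bounds the transcendence degree of the invariants, and it is algebraically closed in $\bbk[X]$, forcing equality. You instead compute the kernel of the locally nilpotent derivation $D=e_\theta$ by the slice $-x_1/x_3$; this is more computational but also works, \emph{provided} you complete the step you rightly flag: passing from the localised kernel $\bbk[x_2,x_3^{\pm 1},y_1,y_2]$ back to $\bbk[X]$ requires knowing that $x_3$ is prime in $\bbk[X]$ (true, since $\bbk[X]/(x_3)$ is cut out by the irreducible rank-$4$ quadric $x_1y_1+x_2y_2$) and that $\bbk[x_2,y_1,y_2]$ injects into that quotient; note that restricting \emph{ambient} invariants to $X$ alone does not suffice, since restriction of invariants to a $U_\theta$-stable subvariety need not be surjective, so the slice argument must be run on $\bbk[X]_{x_3}$ itself. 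Two small bonuses of your version: your observation that $e_1,e_2$ never raise the $x_2$- or $y_2$-power actually proves the uniqueness of the cyclic $T$-eigenvector, a point the paper asserts but does not argue (to make the parenthetical ``hence'' airtight, add that the $(a+1)(b+1)$ monomials have pairwise distinct $T$-weights, so every $T$-eigenvector is a scalar multiple of a monomial); and your closing remark that (i)--(ii) follow from \cite[Theorem~1.6]{odno-sv} since $U_\theta=(U,U)$ for $SL_3$ is exactly the paper's own Remark, which it deliberately bypasses in favour of the self-contained proof.
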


\begin{rema} Parts (i) and (ii) are particular instances of a general assertion, which
is valid for all semisimple $G$ in place of $SL_3$ and  the derived group 
$(U,U)$ in place of $U_\theta$ \cite[Theorems 1.6, 1.8]{odno-sv}.
For reader's convenience, we give a self-contained proof in the $SL_3$-case.
\end{rema}
\begin{proof}  (i) \ Choose the functions $x_1,x_2,x_3$ (resp. $\xi_1,\xi_2,\xi_3$) such that 
they form a $T$-weight basis for $\mathsf R(1,0)\subset \bbk[X]$ (resp. 
$\mathsf R(0,1)\subset \bbk[X]$). Assume  that the weight of $x_i$ is
$\esi_i$ and the weight  of $\xi_i$ is $-\esi_i$.  Then 
$x_1,\dots,\xi_3$ generate $\bbk[X]=\bbk[SL_3/U]$ 
modulo a  relation of the form $\sum_1^3 a_ix_i\xi_i=0$, where $a_i\in \bbk^\times$.
It  follows from the previous description that
 $x_1,x_2,\xi_2,\xi_3$ are $U_\theta$-in\-va\-ri\-ant. 
Thus,
\[
     \bbk[x_1,x_2,\xi_2,\xi_3]\subset \bbk[X]^{U_\theta}
\]  
and both algebras have Krull dimension $4=\dim X-\dim U_\theta$. As the left-hand side algebra 
is algebraically closed in $\bbk[X]$,  they must be equal. 

(ii) \ The vector space decomposition 
$\bbk[X]=\underset{(a,b)\in\BN^2}{\bigoplus}\mathsf R(a,b)$  is  actually a bi-grading, 
and it induces the bi-grading  
\[
  \bbk[X]^{U_\theta}=\underset{(a,b)\in\BN^2}{\bigoplus}\mathsf R(a,b)^{U_\theta} .
\]
Since $x_1,x_2\in \mathsf R(1,0)$ and $\xi_2,\xi_3\in \mathsf R(0,1)$ are free generators
of  $\bbk[X]^{U_\theta}$,
the monomials $\{m(i,j):=x_1^i x_2^{a-i}\xi_2^{b-j} \xi_3^{j}\mid 0\le i \le a, 0\le j\le b\}$
form a basis for $\mathsf R(a,b)^{U_\theta}$. It is convenient to think of this set 
of monomials as a rectangular array of shape $(a+1)\times (b+1)$.

\noindent
The root vectors $e_1,e_2$ form a basis for $\Lie(U/U_\theta)$. Their action
on generators of $\bbk[X]^{U_\theta}$ is given by 
\begin{gather*}
    e_1(x_2)=x_1,  \ e_1(x_1)=0;  \ e_1(\xi_2)=0,  \ e_1(\xi_3)=0, \\
    e_2(\xi_2)=\xi_3,  \ e_2(\xi_3)=0;  \ e_2(x_1)=0,  \ e_2(x_2)=0.
\end{gather*}
Hence $e_1$ (resp. $e_2$) acts along the columns (resp. rows) of that array. Namely,
\\[.6ex]  \indent
$e_1{\cdot} m(i,j)=\begin{cases} m(i+1,j), &  i< a, \\ 0, & i=a. \end{cases}$ \ ; \quad
$e_2{\cdot} m(i,j)=\begin{cases} m(i,j+1), &  j< b, \\ 0, & j=b. \end{cases}$ \ . 
 
 \noindent
Thus, the $T$-eigenvector $m(0,0)=x_2^{a}\xi_2^b$ is the cyclic vector in the 
$U/U_\theta$-module $\mathsf R(a,b)^{U_\theta}$.

(iii) The monomials $x_1^i x_2^{a-i}\xi_2^{b-j} \xi_3^{j}$ are  the highest weight
vectors of all simple $\ah$-modules in $\mathsf R(a,b)$. We have 
$[h/2, x_2]=[h/2,\xi_2]=0$, $[h/2,x_1]=x_1$, and $[h/2,\xi_3]=\xi_3$. Consequently,
the $k$-eigenspace of $h/2$ is the span of monomials 
$x_1^i x_2^{a-i}\xi_2^{b-j}\xi_3^{j}$ with $i+j=k$.  Counting the number of such monomials
yields the coefficient of $\mathsf R_k$, $0\le k\le a+b$,  in the branching rule.
We also see that the cyclic vector $x_2^{a}\xi_2^b$ is the only $\ah$-invariant
in $\mathsf R(a,b)$.
\end{proof}

\begin{rmk} Here is another way to prove that $\ca:=\bbk[SL_3/U]^{U^\theta}$ is a 
polynomial algebra. Since both $U$ and  $U^\theta$ are unipotent, the algebra $\ca$ 
is  factorial. Let $T\subset SL_3$ be a maximal torus normalising $U$. 
Clearly, $\ca$ admits an effective action of $T\times T$ (via left and right translations).
As  $\spe(\ca)$ is four-dimensional, it is a factorial affine toric  variety. Therefore it is
an affine space. 
\end{rmk}

Now, we return to $\GR{A}{2}$-pairs of orbits in an arbitrary simple algebra $\g$.

\begin{thm}    \label{thm:reach}
Let $\co$ be a low-$\GR{A}{2}$ orbit and $e\in\co$. 
Then there are elements $e_1,e_2\in \g^{e}(1)$ such that
\\[.6ex]
\hbox to \textwidth{ $(\spadesuit)$ \hfil
$\g^{e}(i)=[\g^{e}(i-1),e_1]+[\g^{e}(i-1),e_2]$ \ for each $i\ge 1$.
\hfil}
\\[.7ex]
Consequently, $\g^{e}$ is generated by $\g^{e}(0)$, 
$e_1$, and $e_2$; \ $\g^{e}_{nil}$ is generated by $\g^{e}(1)$; \ 
$\g^{e}_{nil}\subset [\g^e, \g^e]$.
\end{thm}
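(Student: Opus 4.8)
The plan is to place $e$ explicitly inside the $\sltri$ coming from the $\GR{A}{2}$-pair and then to read the structure of $\g^e$ off the $SL_3$-module geometry of Theorem~\ref{thm:ab}. First I would invoke the definition of a low-$\GR{A}{2}$ orbit to assume, after conjugation, that $e=e_\theta$ is the highest root vector of $\sltri$, that $\tilde e=e_1+e_2$ is the regular nilpotent with characteristic $h$, and that the simple root vectors $e_1,e_2$ are exactly the elements asserted in the statement. Since $\theta$ is the highest root of $\sltri$, we have $[e_\theta,e_1]=[e_\theta,e_2]=0$, so $e_1,e_2\in\g^e$; and as $\ap_i(h/2)=1$, they lie in $\g^{e}(1)$. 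The decisive reduction is that $\bbk e=\Lie(U_\theta)$, whence $\g^e=\Ker(\ad e)=\g^{U_\theta}$, the fixed-point space of $U_\theta$ acting on $\g$ through $\Ad$. Decomposing the adjoint module as $\g=\bigoplus_{(a,b)}\mathsf R(a,b)^{\oplus m_{a,b}}$, I would thus obtain $\g^e=\bigoplus_{(a,b)}\bigl(\mathsf R(a,b)^{U_\theta}\bigr)^{\oplus m_{a,b}}$, a module over $U/U_\theta$ whose Lie algebra is spanned by $e_1,e_2$.

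The heart of the argument is to feed each summand $\mathsf R(a,b)^{U_\theta}$ into Theorem~\ref{thm:ab}(ii)--(iii). By (ii) this space is a cyclic $U/U_\theta$-module with a unique (up to scalar) cyclic $T$-eigenvector $v_0$, and by (iii) $v_0$ lies in the one-dimensional $\ah$-submodule $\mathsf R_0$. Hence $h/2$ kills $v_0$, that is $v_0\in\g^{e}(0)$; and since $\ad e_1$ and $\ad e_2$ each raise the $(h/2)$-degree by $1$, the vectors $(\ad e_1)^i(\ad e_2)^j v_0$ with $0\le i\le a$, $0\le j\le b$ form a basis of $\mathsf R(a,b)^{U_\theta}$ that is homogeneous of degree $i+j$. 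Summing over the copies, $\g^{e}(k)$ is spanned by all vectors $(\ad e_1)^i(\ad e_2)^j v_0$ with $i+j=k$.

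From this $(\spadesuit)$ follows at once. For $k\ge1$ each spanning vector has $i\ge1$ or $j\ge1$: if $i\ge1$ it equals $[e_1,(\ad e_1)^{i-1}(\ad e_2)^j v_0]$ with the inner vector in $\g^{e}(k-1)$, so it lies in $[\g^{e}(k-1),e_1]$, and symmetrically if $j\ge1$. Therefore $\g^{e}(k)\subseteq[\g^{e}(k-1),e_1]+[\g^{e}(k-1),e_2]$, while the reverse inclusion is automatic because $\g^e$ is a graded subalgebra and $e_1,e_2\in\g^{e}(1)$. This establishes $(\spadesuit)$, and the three consequences are immediate inductions on $i$: iterating $(\spadesuit)$ writes every $\g^{e}(i)$ as iterated brackets of $\g^{e}(0)$ against $e_1,e_2$, so $\g^e$ is generated by $\g^{e}(0),e_1,e_2$; because $e_1,e_2\in\g^{e}(1)$, the same iteration exhibits $\g^{e}_{nil}=\bigoplus_{i\ge1}\g^{e}(i)$ as generated by $\g^{e}(1)$; and since each $\g^{e}(i)$ with $i\ge1$ is a sum of brackets of elements of $\g^e$, we get $\g^{e}_{nil}\subseteq[\g^e,\g^e]$.

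The genuinely hard input is Theorem~\ref{thm:ab} itself: it is parts (ii) and (iii)---the cyclicity of $\mathsf R(a,b)^{U_\theta}$ and the fact that its cyclic generator is $\ah$-invariant---that simultaneously force the one-generator structure of each isotypic block and the placement of that generator in degree $0$. Granting that theorem, the remaining work is bookkeeping: identifying $\g^e$ with $\g^{U_\theta}$, matching the $(h/2)$-grading with the bidegree $(i,j)$, and tracking the harmless signs in $\ad e_k=[e_k,-]$. The single point meriting care is the verification that the cyclic vector lands in $\g^{e}(0)=\g^{e}_{red}$ rather than in a higher piece, for it is precisely this that lets the induction start at the reductive part $\g^{e}(0)$.
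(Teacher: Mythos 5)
Your proposal is correct and follows essentially the same route as the paper's own proof: place $e=e_\theta$ in the $\sltri$ of the $\GR{A}{2}$-pair, identify $\g^e$ with the $U_\theta$-invariants $\bigoplus_i\mathsf R(a_i,b_i)^{U_\theta}$ of the adjoint module, and invoke Theorem~\ref{thm:ab}(ii),(iii) to get a cyclic $U/U_\theta$-generator in $\g^e(0)$ for each summand, whence $(\spadesuit)$. Your write-up merely makes explicit some bookkeeping the paper leaves implicit (the spanning vectors $(\ad e_1)^i(\ad e_2)^j v_0$ and the matching of the $(h/2)$-grading with the bidegree), all of which is sound.
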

\begin{proof} 
Take an  $\sltri \subset \g$ such that $\{e,h,f\}\subset\sltri$, $e=e_\theta\in\sltri$ is a highest weight vector
and 
$e_1, e_2$ are simple root elements, as above.
To prove $(\spadesuit)$, we
decompose $\g$ as a sum of simple $\sltri$-modules, 
$\g=\bigoplus_i \mathsf R(a_i,b_i)$.
We have the distinguished submodule $\sltri\simeq \mathsf R(1,1)\subset\g$ with elements
$e_1,e_2\in \g^{e}(1)\cap\sltri$ and $e\in \g^{e}(2)\cap\sltri$.
Since $\g^e=\bigoplus_i \mathsf R(a_i,b_i)^e$,
it suffices to check $(\spadesuit)$ for each  $\mathsf R(a_i,b_i)$ separately. 
That is, we have to prove that
\[
  R(a_i,b_i)^e(i)=[R(a_i,b_i)^e(i-1),e_1]+[R(a_i,b_i)^e(i-1),e_2] \ \text{ for each $i\ge 1$}.
\]
By Theorem~\ref{thm:ab}(ii),(iii), every $\mathsf R(a,b)^{U_\theta}=\mathsf R(a,b)^{e}$
contains a $U/U_\theta$-cyclic weight vector  that actually
lies in $R(a_i,b_i)^\ah= R(a_i,b_i)^{e}(0)$, 
This is exactly what we need.
\end{proof}

In view of this theorem and  question $(\diamondsuit)$ about $\g^e_{nil}$ 
for reachable elements, it is
desirable to know what reachable orbits are low-$\GR{A}{2}$ orbits. 
In \cite[Table\,25]{dy}, Dynkin pointed out all simple subalgebras of 
rank $>1$ in the exceptional 
algebras; in particular, the subalgebras of type $\GR{A}{2}$. (There are few errors in that
Table, which are corrected by Minchenko \cite[2.2]{minch}.) 
From this one easily deduces the list of $\GR{A}{2}$-pairs.
In the last column of Table~\ref{table_E}, we point out the $\GR{A}{2}$-pairs among 
all friendly pairs and thereby the low-$\GR{A}{2}$ orbits in the exceptional algebras.

\noindent
All reachable orbits among the orbits $G{\cdot}\edva$ are indicated in the first column
of Table~\ref{table_E}. 
However, this does not exhaust all reachable orbits in the exceptional algebras.
There are also reachable orbits that  are not included in a friendly pair,
Altogether, there still remain seven reachable orbits for $\GR{E}{8}$ and one orbit for each of
$\GR{E}{6}$, $\GR{E}{7}$, and $\GR{F}{4}$ that are not low-$\GR{A}{2}$ orbits. 

\begin{rmk}   \label{rem:low-c2}
One can say that $\co\subset\N$ is a {\it low-$\GR{C}{2}$ orbit\/} if there is a subalgebra
$\mathfrak{sp}_4\simeq \mathfrak{so}_5\subset \g$ such that $\co\cap\mathfrak{sp}_4$
is a minimal nilpotent orbit of $\mathfrak{sp}_4$. Such an orbit is not necessarily included in
a friendly pair, but it is always reachable. There is an analogue of Theorem~\ref{thm:reach}
for the low-$\GR{C}{2}$ orbits that can be derived from a description of the algebra 
$\bbk[Sp_4/U]^{U_\theta}$ and  the spaces $\mathsf R(a,b)^{U_\theta}$ for all 
simple $Sp_4$-modules $\mathsf R(a,b)$. Note that here $U_\theta\ne (U,U)$, hence this is
not related to \cite{odno-sv}.
However, the proof becomes much more involved, because 
$\bbk[Sp_4/U]^{U_\theta}$ appears to be a hypersurface and we need an explicit description
of the unique relation. In the exceptional algebras, there are only two low-$\GR{C}{2}$ orbits that are not low-$\GR{A}{2}$ orbits (use again Dynkin's table!). These are orbits
$\GR{A}{3}+2\GR{A}{1}$ and $\GR{A}{2}+3\GR{A}{1}$ for $\g=\GR{E}{8}$. 
In view of such limited applicability, we do not include the proofs in this note. 
\end{rmk}

In the classical algebras, $\GR{A}{2}$-pairs correspond to representations of $\sltri$
(all, orthogonal, and symplectic, respectively). But this correspondence is not
bijective and it is not clear how to get a description of the corresponding partitions.

Theorem~\ref{thm:reach}, Remark~\ref{rem:low-c2}, and similar results for classical Lie algebras (see below)  strongly  support the following 

\begin{conj}   \label{gip:reach}  Let $\g$ be a simple Lie algebra.
If $e\in\N$ is reachable, then {\sf (a)}
$\g^{e}_{nil}$ is generated by $\g^{e}(1)$  and \ {\sf (b)}
$\g^{e}_{nil}\subset [\g^e, \g^e]$.
\end{conj}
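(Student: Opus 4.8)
The plan is to attack the two assertions by first reducing them to a single module-generation statement and then running through the classification of reachable orbits. Observe first that \textbf{(a) almost forces (b)}: if $\g^e_{nil}$ is generated by $\g^e(1)$, then each homogeneous piece $\g^e(i)$ with $i\ge 2$ is a sum of iterated brackets of elements of $\g^e(1)$ and hence lies in $[\g^e,\g^e]$, so (b) collapses to the single degree-one statement $\g^e(1)\subset[\g^e,\g^e]$. Since $\g^e(1)$ is a module over the reductive algebra $\g^e_{red}=\g^e(0)$, one has $[\g^e_{red},\g^e(1)]=\g^e(1)$ exactly when $\g^e(1)$ carries no $\g^e_{red}$-invariant; thus the only genuinely new ingredient for (b) is control of the invariant space $\g^e(1)^{\g^e_{red}}$. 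The reachability hypothesis $e\in[\g^e,\g^e]$ handles precisely the invariant $e\in\g^e(2)$, and one would need the analogous statement one degree lower, for $\g^e(1)$. In short, the core of the conjecture is the assertion
\[
   \g^e(i)=[\g^e(1),\g^e(i-1)]\quad\text{for all } i\ge 2, \qquad \text{together with}\qquad \g^e(1)^{\g^e_{red}}\subset[\g^e,\g^e].
\]

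The second step is a \emph{reduction to distinguished orbits}. Let $\el\ni e$ be a minimal Levi subalgebra; writing $\el=\z_\g(\h)$ for $\h$ a Cartan subalgebra of $\g^e_{red}$ (as in Theorem~\ref{thm:divis-Levi}), the subalgebra $\el^e=\z_{\g^e}(\h)$ is the zero-$\h$-weight part of $\g^e$ and $e$ is distinguished in $\el$. I would prove the conjecture first for $e$ distinguished in $\el$ and then propagate it to $\g$ by letting the semisimple part of $\g^e_{red}$ act: the nonzero $\h$-weight spaces of each $\g^e(i)$ are generated from $\el^e(i)$ under $[\g^e_{red},\g^e_{red}]$, so degree-one generation inside $\el$ together with degree-one generation along the $\g^e_{red}$-action should yield the full statement in $\g$. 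Making this propagation precise — showing that the $\g^e_{red}$-module $\g^e(i)$ is generated by its zero-weight part together with $[\g^e(1),\g^e(i-1)]$ — is the technical crux of the reduction.

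With the problem reduced to distinguished reachable orbits, I would proceed case by case. For $\g=\slv$ the statement is known \cite[Theorem\,4.5]{reach}. For $\g=\spv$ and $\g=\sov$ the reachable orbits are singled out by explicit partition conditions (via the criterion $\codim_{\ov\co}(\ov\co\setminus\co)\ge 4$ of \cite{EG}); here I would fix a Jordan basis adapted to the $\tri$-action, write down spanning sets for each $\g^e(i)$, and verify surjectivity of the bracket map $\g^e(1)\otimes\g^e(i-1)\to\g^e(i)$ by a direct combinatorial computation over pairs of Jordan blocks. For the exceptional algebras the reachable orbits form a finite, explicitly known list; for each one computes the graded centraliser $\g^e=\bigoplus_i\g^e(i)$ and checks degree-one generation directly — a finite, computer-assisted verification. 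The orbits that are low-$\GR{A}{2}$ (resp. low-$\GR{C}{2}$) are already covered by Theorem~\ref{thm:reach} (resp. Remark~\ref{rem:low-c2}), which prunes the list.

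The main obstacle is exactly the reachable orbits that are \emph{neither} low-$\GR{A}{2}$ \emph{nor} low-$\GR{C}{2}$ — the five in $\GR{E}{8}$ together with one each in $\GR{E}{6}$, $\GR{E}{7}$, and $\GR{F}{4}$ noted above. For the low-$\GR{A}{2}$ and low-$\GR{C}{2}$ cases the decisive tool is the uniform description of the $U_\theta$-invariants $\mathsf R(a,b)^{U_\theta}$ for $SL_3$, resp. $Sp_4$ (Theorem~\ref{thm:ab}), which is available precisely because $e$ sits as a minimal nilpotent inside a small, universal subalgebra. No such embedding exists for the remaining reachable orbits, so there is at present no uniform representation-theoretic mechanism forcing degree-one generation; bridging the gap between the global geometric content of reachability ($\codim\ge 4$) and the local structural statement $(\spadesuit)$ is exactly what is missing. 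I expect the $\sov$ combinatorics and these residual exceptional cases to be where the argument resists a clean, uniform treatment, and a conceptual proof covering all reachable orbits simultaneously would require a genuinely new idea.
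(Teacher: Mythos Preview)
The statement is presented in the paper as a \emph{conjecture}, not a theorem; the paper offers no proof. Immediately after stating it, the author records precisely the partial progress you describe --- the $\slv$ case via \cite[Theorem~4.5]{reach}, the $\spv$ and $\sov$ cases via \cite{osya}, the low-$\GR{A}{2}$ and low-$\GR{C}{2}$ orbits via Theorem~\ref{thm:reach} and Remark~\ref{rem:low-c2} --- and then observes that eight reachable orbits in the exceptional algebras remain, remarking that these ``can be verified using GAP'' while ``the challenge is, of course, to find a conceptual proof.'' Your outline arrives at exactly the same endpoint: you correctly isolate the same residual cases (five in $\GR{E}{8}$ after removing the two low-$\GR{C}{2}$ orbits, plus one each in $\GR{E}{6}$, $\GR{E}{7}$, $\GR{F}{4}$) and correctly conclude that no uniform mechanism is available. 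So there is nothing to compare against: your assessment of the state of affairs coincides with the paper's.

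Two remarks on the outline itself. Your observation that (a) nearly forces (b) --- the missing piece being $\g^e(1)^{\g^e_{red}}\subset[\g^e,\g^e]$ --- is correct and is a genuine sharpening not made explicit in the paper, which treats (a) and (b) as parallel. On the other hand, your proposed reduction to distinguished orbits via the minimal Levi $\el$ is not pursued in the paper and is not obviously valid: even granting degree-one generation inside $\el^e$, the propagation step (generating the nonzero $\h$-weight part of $\g^e(i)$ from $\el^e(i)$ and $[\g^e(1),\g^e(i-1)]$ under the $\g^e_{red}$-action) is a real claim that would require its own proof, as you yourself flag. If you intend to make that reduction rigorous you would be going beyond anything in the text.
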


For $\slv$, this is proved in \cite[Theorem\,4.5]{reach}. (Although  property (b) 
is not stated there, the argument actually proves both properties.)
The case of $\spv$ and $\sov$ is considered in \cite{osya}. Practically, we  have only 
eight unclear cases in exceptional Lie algebras. No doubt, this  can be verified using GAP.
But the challenge is, of course,  to find a conceptual proof.

\section{Very friendly pairs of orbits}  
\label{sect:very}

\noindent
For a divisible orbit $\co=G{\cdot}e$ and an $\tri$-triple $\{e,h,f\}$, we agree to choose 
$\edva$  in $\g( 4)$, i.e., $[h,\edva]=4\edva$.

\begin{df}
A friendly pair $(\co, \co^{\langle 2\rangle})$ 
is said to be {\it very friendly\/}, if 
$[e,\edva]=0$ for a suitable choice of $\edva\in \g( 4)$, i.e., 
if $\co^{\langle 2\rangle}\cap \g^e( 4)\ne\varnothing$.
\end{df}

\begin{lm}   \label{commute}
If\/ $\g$ is a classical Lie algebra, then all friendly pairs are very friendly.
\end{lm}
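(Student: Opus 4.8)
The plan is to verify very friendliness separately for each of the three classical families, using the explicit constructions of $\edva$ already given in the proof of Theorem~\ref{sl+sp+so}. The key point is that very friendliness amounts to showing $\co^{\ltr}\cap\g^e(4)\ne\varnothing$, i.e., that we may choose $\edva$ commuting with $e$; so it suffices to check that the explicit $\edva$ produced earlier (or a small modification of it) already satisfies $[e,\edva]=0$ while lying in $\g(4)$.

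First I would dispose of $\g=\slv$. Here the proof of Theorem~\ref{sl+sp+so}(i) takes $\edva=e^2$, which visibly commutes with $e$ and satisfies $[h/2,e^2]=2e^2$, i.e.\ $e^2\in\widetilde\g(2)=\g(4)$. Thus $\edva=e^2\in\g^e(4)$ and the pair is very friendly with no extra work. Next, for $\g=\spv$, the earlier construction reduces to the case $\boldsymbol\lb[e]=(2l+1,2l+1)$ with $\BV=\BV[1]\oplus\BV[2]$ and sets $\edva\vert_{\BV[1]}=e^2$, $\edva\vert_{\BV[2]}=-e^2$. Since $e$ preserves each $\BV[j]$ and $\edva$ is $\pm e^2$ on each block, $[e,\edva]=0$ blockwise; and $[h/2,\edva]=2\edva$ holds by the $\slv$ computation on each block. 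Hence $\edva\in\g^e(4)$ again, so symplectic friendly pairs are very friendly.

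The genuinely new work is in $\g=\sov$, where I would split into the two cases of the earlier proof. In case (a), $\lb_1=\lb_2$ and the construction is exactly as for $\spv$ ($\edva=\pm e^2$ on the two isotropic blocks), so commutativity with $e$ and membership in $\g(4)$ follow as before. The only delicate case is (b), where $\lb_1=4m+1$, $\lb_2=4m-1$ and the explicit $\edva$ built from the bases $\{v_i\},\{w_i\}$ does \emph{not} preserve the Jordan blocks of $e$. Here I would directly compute $[e,\edva]$ on each basis vector: since $e(v_i)=v_{i+1}$, $e(w_i)=w_{i+1}$, one compares $e\edva$ and $\edva e$ along each of the four chains written in the displayed definition of $\edva$ and checks that the two compositions agree (so that $[e,\edva]=0$), possibly after rescaling $\edva$ on individual chains by constants that do not affect its $h/2$-weight. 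One must also recheck that $[h/2,\edva]=2\edva$, which follows because each arrow in the definition raises the $h$-eigenvalue by exactly $4$ (equivalently the $h/2$-eigenvalue by $2$): the source $v_i$ or $w_i$ and target $w_{i+2}$ or $v_{i+2}$ differ by two positions along an $e$-chain.

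I expect the main obstacle to be precisely the bookkeeping in case (b): verifying $[e,\edva]=0$ requires matching the images $e\edva$ and $\edva e$ across the interleaved chains, where $\edva$ maps $v$'s to $w$'s and vice versa with alternating signs, so the sign pattern $(-1)$'s in the definition must be reconciled with $e(v_i)=v_{i+1}$, $e(w_i)=w_{i+1}$. If the given $\edva$ fails to commute on the nose, the remedy is to adjust it within its $h/2$-weight space $\g(4)$ by the residual $K$-action or by rescaling chains; since Proposition~\ref{trivial}(1) guarantees that \emph{some} element of $\co^{\ltr}$ lies in $\co'\subset\g(4)$, what must be arranged is only that a commuting representative exists, and the explicit formula is intended to provide one after this verification.
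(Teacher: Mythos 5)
Your proposal is correct and takes essentially the same approach as the paper: the paper's entire proof of Lemma~\ref{commute} is the one-line observation that the elements $\edva$ constructed in the proof of Theorem~\ref{sl+sp+so} already commute with $e$, which is exactly what you verify family by family. In the delicate case (b) for $\sov$ the check goes through on the nose—uniformly $\edva(v_i)=-w_{i+2}$ and $\edva(w_i)=-v_{i+2}$ (with out-of-range vectors read as $0$), while $e$ shifts indices by one, so $e\edva=\edva e$ directly and the fallback adjustment you contemplate (which, as stated, would only place a representative in $\g(4)$, not in $\g^e(4)$) is never needed.
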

\begin{proof}
The elements $\edva$ constructed in the  proof of Theorem~\ref{sl+sp+so}
commute with $e$.
\end{proof}
\noindent

\begin{lm}    \label{lem:A2-very}
If $(G{\cdot}e, G{\cdot}\edva)$ is an $\GR{A}{2}$-pair, then it is very friendly.
\end{lm}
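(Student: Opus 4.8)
The plan is to verify the very-friendly condition entirely inside the subalgebra $\sltri\subset\g$ furnished by the $\GR{A}{2}$-pair structure, in the same spirit as the lemma following Definition~\ref{def:pair}, where friendliness and reachability were read off from the two orbits in $\sltri$. By definition of an $\GR{A}{2}$-pair, there is an $\sltri\subset\g$ with $G{\cdot}e\cap\sltri$ the regular nilpotent orbit and $G{\cdot}\edva\cap\sltri$ the minimal one. Using the notation of Section~\ref{sect:A2}, let $\ap_1,\ap_2$ be the simple roots of $\sltri$, $\theta=\ap_1+\ap_2$ the highest root, and $e_1,e_2,e_\theta$ the corresponding root vectors. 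Since the very-friendly property is a statement about the orbit pair, I may choose the representative $e=e_1+e_2$, a regular nilpotent of $\sltri$ lying in $G{\cdot}e$, and I propose the minimal nilpotent $\edva:=e_\theta\in G{\cdot}\edva$ as the witnessing element.

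Two verifications then finish the argument. First, $e_\theta\in\g(4)$: the $\sltri$-characteristic $h$ of $e$ satisfies $\ap_1(h)=\ap_2(h)=2$, so $[h,e_\theta]=\theta(h)\,e_\theta=(\ap_1+\ap_2)(h)\,e_\theta=4e_\theta$. Second, $[e,e_\theta]=0$: for each $i$ one has $[e_i,e_\theta]\in\g_{\ap_i+\theta}=0$, because neither $2\ap_1+\ap_2$ nor $\ap_1+2\ap_2$ is a root in type $\GR{A}{2}$; hence $[e,e_\theta]=[e_1+e_2,e_\theta]=0$. As $e_\theta$ is the minimal nilpotent of $\sltri$, it lies in $G{\cdot}\edva\cap\sltri\subset\co^\ltr$, so the two facts above give $\edva\in\co^\ltr\cap\g^e(4)$, i.e. $\co^\ltr\cap\g^e(4)\ne\varnothing$, which is precisely the very-friendly condition.

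There is essentially no obstacle, the entire content being that everything happens inside $\sltri$. The one point deserving care is the choice of characteristic: I must take $h$ to be the $\sltri$-characteristic of $e$, so that $e_\theta$ lands in the eigenspace $\g(4)$ rather than in some other even eigenspace. This $h$ is a genuine characteristic of $e$ in $\g$ because $\{e,h,f\}\subset\sltri\subset\g$ is an $\tri$-triple of $\g$, and by Proposition~\ref{stab} it determines exactly the grading used to define $\g(4)$ and $\g^e(4)$.
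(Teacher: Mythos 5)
Your proof is correct and takes exactly the route of the paper, whose entire proof is the one-line remark that the very-friendly property holds inside $\sltri$: you simply make this explicit by exhibiting $e=e_1+e_2$ and $\edva=e_\theta$, checking $\theta(h)=4$ and $[e,e_\theta]=0$ via the non-roots $\ap_i+\theta$. Your added care about using the $\sltri$-characteristic $h$ as the characteristic of $e$ in $\g$ is a worthwhile detail the paper leaves implicit.
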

\begin{proof}
The property of being very friendly holds inside $\sltri$.
\end{proof}

\noindent This again shows that it is  helpful to know the $\GR{A}{2}$-pairs 
among pairs of orbits in Table~\ref{table_E}. 

\begin{thm}   \label{commute_exc}
If $\g$ is an exceptional Lie algebra and  $\ede$ is divisible,
then $(G{\cdot}e, G{\cdot}\edva)$ is very friendly,
with only one exception---$G{\cdot}e$ being the orbit  $\GR{F}{4}(a_2)$ for\/ $\g=\GR{F}{4}$.
\end{thm}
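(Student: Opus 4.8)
The plan is to proceed case-by-case through the thirteen divisible orbits for the four exceptional types $\GR{E}{6},\GR{E}{7},\GR{E}{8},\GR{F}{4},\GR{G}{2}$ listed in Table~\ref{table_E}, showing that in all but one case one can realise $\edva$ inside $\g^e(4)$. A large fraction of the work is already done for free: by Lemma~\ref{lem:A2-very}, every pair that is an $\GR{A}{2}$-pair is automatically very friendly, and these are exactly the rows flagged with {\bf\color{green} +} in the last column of Table~\ref{table_E}. So first I would invoke Lemma~\ref{lem:A2-very} to dispose of all the $\GR{A}{2}$-pairs at once. This leaves only the handful of friendly pairs marked {\bf\color{green} --} in the table, namely $\GR{E}{6}(a_1)$ for $\GR{E}{6}$; $\GR{A}{6}$ for $\GR{E}{7}$; the orbits $\GR{A}{6}$, $\GR{E}{8}(b_6)$, $\GR{E}{8}(a_6)$, $\GR{E}{8}(a_4)$ for $\GR{E}{8}$; and $\GR{F}{4}(a_2)$ for $\GR{F}{4}$.

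For each remaining orbit the task is to decide whether $\co^\ltr\cap\g^e(4)\ne\varnothing$, i.e.\ whether a nilpotent element of the orbit $G{\cdot}\edva$ can be found in the degree-$4$ piece of the centraliser $\g^e=\bigoplus_{i\ge 0}\g^e(i)$. The clean way to organise this is to compute the $\BN$-graded $G^e(0)$-module structure of $\g^e_{nil}=\g^e({\ge}1)$ and, in particular, to identify the reductive group $G^e(0)=G^e_{red}$ acting on the fibre $\g^e(4)$. Then being very friendly amounts to the statement that $\g^e(4)$ contains an element whose $\g$-nilpotent type is the prescribed $\co^\ltr$; equivalently, that the $G^e(0)$-orbit structure on $\g^e(4)$ meets $\co^\ltr$. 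Here I would exploit Proposition~\ref{trivial}(1): for $\edva$ chosen in the dense $G(0)$-orbit $\co'$ of $\g(4)$, divisibility already guarantees $\edva\in\co^\ltr$; the extra demand is only that a representative can be pushed into $\g^e(4)=\g(4)\cap\g^e$. One feasible reduction is Theorem~\ref{thm:divis-Levi}: several of these orbits arise as distinguished orbits in a proper Levi $\el$ whose derived part is classical (e.g.\ $\GR{A}{6}$ lives in $[\el,\el]=\mathfrak{sl}_7$, and $\GR{E}{6}(a_1)$ in $\GR{E}{7}$ reduces to $\GR{E}{6}$), and for classical $[\el,\el]$ one applies Lemma~\ref{commute}, which supplies an $\edva$ commuting with $e$ and hence lying in $\g^e(4)$. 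This handles the $\GR{A}{6}$ cases and the $\GR{E}{6}(a_1)$-in-$\GR{E}{7}$ case immediately, leaving genuinely exceptional computations for $\GR{E}{8}(b_6),\GR{E}{8}(a_6),\GR{E}{8}(a_4)$ and the single orbit $\GR{F}{4}(a_2)$.

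For these last few I would carry out the dimension bookkeeping explicitly. Using Proposition~\ref{first}(2) one knows $\dim\g^{\edva}=\dim\g^e+\dim\g^e_{nil}$, and the graded pieces $\dim\g^e(2i)$ are tabulated (or computed from the weighted Dynkin diagram); the key inequality to verify is whether the generic $G^e(0)$-orbit in $\g^e(4)$ is large enough to coincide with $\co^\ltr\cap\g^e(4)$. I would compute, for $e$ in each of these orbits, the reductive part $G^e(0)$ and its representation on $\g^e(4)$ directly from the tables of centralisers in the exceptional algebras (this is the most computational step, and I expect it to be where the real work lies). For the affirmative cases one exhibits a specific element of $\g^e(4)$ of the correct type; for $\GR{F}{4}(a_2)$ one must instead show no such element exists. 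The decisive obstruction for $\GR{F}{4}(a_2)$ is that the $h$-weight-$4$ space of the centraliser is too degenerate: every element $x\in\g^e(4)$ has $\htt(x)<\tfrac12\htt(e)$, so by Proposition~\ref{trivial}(2) no $x\in\g^e(4)$ can belong to $\co^\ltr=G{\cdot}\edva$. The main obstacle of the whole proof is thus isolating and verifying this single negative case; here the cleanest argument is the height/support estimate from the Remark following Proposition~\ref{trivial} (that a nonzero $x\in\g^e(4)$ has $\|h'\|<\tfrac12\|h\|$ when the orbit constraint is violated), combined with a direct check of the $\BN$-grading of $\g^e$ for $\GR{F}{4}(a_2)$. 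Everything else reduces either to Lemma~\ref{lem:A2-very} or to a classical reduction via Theorem~\ref{thm:divis-Levi} and Lemma~\ref{commute}.
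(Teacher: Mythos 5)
Your skeleton matches the paper's: dispose of all $\GR{A}{2}$-pairs at once via Lemma~\ref{lem:A2-very}, reduce orbits with classical names to Theorem~\ref{thm:divis-Levi} plus Lemma~\ref{commute} (this does handle the $\GR{A}{6}$ cases), and isolate $\GR{F}{4}(a_2)$ as the negative case via a height obstruction. But there are three concrete gaps. First, you silently drop $\GR{E}{6}(a_1)$ in $\GR{E}{6}$ from your final list of hard cases: you claim the Levi reduction handles ``$\GR{E}{6}(a_1)$-in-$\GR{E}{7}$'', but that pair is flagged as an $\GR{A}{2}$-pair in Table~\ref{table_E} and was already settled, whereas $\GR{E}{6}(a_1)$ in $\GR{E}{6}$ itself is flagged negative and is \emph{distinguished} there (the minimal Levi is all of $\GR{E}{6}$), so no Levi reduction is available. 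The paper acknowledges that this orbit, together with $\GR{E}{8}(a_4)$, requires a separate direct verification. Second, for $\GR{E}{8}(b_6)$ and $\GR{E}{8}(a_6)$ you miss the paper's actual device, which is not a Levi reduction but Remark~\ref{es} combined with Dynkin's column ``minimal including regular subalgebras'': these orbits meet \emph{regular non-Levi} classical subalgebras ($\mathfrak{so}_{16}$ via the partition $(9,7)$, and $\mathfrak{sl}_9$ via the principal orbit), which are divisible by Theorem~\ref{sl+sp+so}, so Lemma~\ref{commute} applies inside them. Your substitute --- ``compute $G^e(0)$ and its representation on $\g^e(4)$ from tables and exhibit an element'' --- is deferred work, not an argument; as written, these cases remain open in your proposal.

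Third, and most seriously, your mechanism for the negative case $\GR{F}{4}(a_2)$ is logically misapplied. The Remark after Proposition~\ref{trivial} estimates $\|h'\|<\frac12\|h\|$ under the hypothesis that $G{\cdot}e$ is \emph{not} divisible --- but $\GR{F}{4}(a_2)$ \emph{is} divisible, and indeed by Proposition~\ref{trivial}(1) the generic element of $\g(4)$ lies in $\co^\ltr$, so no estimate on generic elements of $\g(4)$ can produce the obstruction. The obstruction must be localised to the one-dimensional subspace $\g^e(4)\subset\g(4)$, and a ``direct check of the $\BN$-grading of $\g^e$'' (i.e., graded dimensions) cannot bound heights of its elements. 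The paper's proof has real content here: it realises $\GR{F}{4}(a_2)$ as $G{\cdot}\co_0$ for the regular nilpotent orbit of the symmetric subalgebra $\g_0$ of type $\GR{C}{3}+\GR{A}{1}$, uses the $\ah$-module decompositions of $\g_0$ and $\g_1$ to see $\g^e(4)\subset\g_1$, and then, by an explicit root computation, pins $\g^e(4)$ inside $\g_{\nu_1}\oplus\g_{\nu_2}$ with $\hot(\nu_1)=\hot(\nu_2)=7$; since heights of roots in $\GR{F}{4}$ lie in $[-11,11]$, every $x$ in that space has $\htt(x)\le 3<5=\frac12\htt(e)=\htt(\edva)$, whence $\co^\ltr\cap\g^e(4)=\varnothing$. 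You state the right decisive fact but supply no valid route to it.
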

\begin{proof}
1$^o$. Let us prove that all the pairs  in Table~\ref{table_E} are very friendly,
except the last pair for $\GR{F}{4}$. To this end, we employ the following technique:

\textbullet \ \  Combining Remark~\ref{es} and Lemma~\ref{commute} shows that
if $G{\cdot}e$ is divisible orbit,
$e$ lies in a {\sl classical\/} subalgebra $\es\subset\g$, and $S{\cdot}e$ is divisible,
then the pair in question is very friendly. By Theorem~\ref{thm:divis-Levi},
this applies to all orbits $G{\cdot}e$ in Table~\ref{table_E}
whose name  is  a (sum of) classical Cartan type(s).

\textbullet \ \  Even if a divisible orbit's name is an exceptional Cartan type, 
this orbit still can meet a regular\footnote{A subalgebra of $\g$ is called {\it regular\/} if it 
is normalised by a Cartan subalgebra} classical subalgebra that is not a Levi subalgebra. 
To see this, one has to use
Dynkin's tables \cite[Tables 16-20]{dy}, namely the column "minimal including regular 
subalgebras". For instance, the divisible $\GR{E}{8}$-orbit denoted nowadays by 
$\GR{E}{8}(b_6)$  has  the label  $\GR{D}{8}(a_3)$ in \cite[Table\,20]{dy}, which means 
that it is generated by a certain
distinguished orbit in $\GR{D}{8}=\mathfrak{so}_{16}$; 
actually, by the orbit corresponding to the partition $(9,7)$.
By Theorem~\ref{sl+sp+so}(ii), this $SO_{16}$-orbit is divisible.
Hence the corresponding pair of $\GR{E}{8}$-orbits is very friendly.
Similarly, the divisible $\GR{E}{8}$-orbit denoted nowadays by $\GR{E}{8}(a_6)$ has also 
the label $\GR{A}{8}$. This means that it is generated by the principal 
nilpotent orbit in $\mathfrak{sl}_9$, which is divisible. 
Such an argument also applies to the orbits
$\GR{G}{2}(a_1)$, $\GR{F}{4}(a_3)$, $\GR{E}{8}(a_7)$. 

\textbullet \ \  Finally, in view of  Lemma~\ref{lem:A2-very}, all $\GR{A}{2}$-pairs are 
very friendly.

\noindent
After all these considerations, only three divisible orbits left: 
$\GR{F}{4}(a_2)$ for $\g=\GR{F}{4}$; \ $\GR{E}{6}(a_1)$ for $\g=\GR{E}{6}$; \ 
$\GR{E}{8}(a_4)$ for $\g=\GR{E}{8}$. In the last two cases, we can show via direct bulky 
considerations that the pairs are very friendly, while the first case represents the only 
non-very friendly pair. Below, we consider in details this bad case.

2$^o$. In this part of the proof, $\g$ is a simple Lie algebra of type $\GR{F}{4}$.
The orbit $\GR{F}{4}(a_2)$ is distinguished
and $\dim\g^e( 4)=1$. 
Therefore, it suffice to test a non-zero element of $\g^e( 4)$.
We will prove that the height of such a non-zero element 
is strictly less than $\htt(e)/2$. 
The numbering of the simple roots of simple Lie algebras follows
\cite{VO}, and the $i$-th fundamental weight  
is denoted by $\varpi_i$.

There is an involutory automorphism $\vartheta$ of $\g$, with 
the corresponding $\BZ_2$-grading $\g=\g_0\oplus\g_1$, such that 
the subalgebra $\g_0$ is of type $\GR{C}{3}+\GR{A}{1}$.
If $\co_0$ is the regular nilpotent orbit in $\g_0$, then
$G{\cdot}\co_0$ is the orbit $\GR{F}{4}(a_2)$ in $\g$. 
This can be verified as follows.
The $\g_0$-module $\g_1$ is isomorphic to $\mathsf R(\varpi_3)\otimes \mathsf R_1$.
(Here $\mathsf R(\varpi_3)$ is a 14-dimensional $\GR{C}{3}$-module and $\mathsf R_1$
is the standard two-dimensional $\GR{A}{1}$-module.)
Let $\ah$ be a principal $\tri$ in $\g_0$. Decomposing $\g_0$ and $\g_1$ as
$\ah$-modules, one obtains 
\begin{equation}  \label{eq:decomp0_1}
  \g_0=2\mathsf R_2+\mathsf R_6+\mathsf R_{10} \quad \text{ and } \quad 
  \g_1=\mathsf R_2+\mathsf R_4+\mathsf R_8+\mathsf R_{10} ,
\end{equation}
where $\mathsf R_n$ stands for the $(n{+}1)$-dimensional simple $\ah$-module.
From \eqref{eq:decomp0_1}, it follows that if $e$ is a nonzero nilpotent element of 
$\ah$, then $\dim \g^e=8$. 
Hence $\dim G{\cdot}e=44$ and it is the
orbit $\GR{F}{4}(a_2)$, as claimed. (The  algebra of type $\GR{F}{4}$ has 
a unique nilpotent orbit of dimension $44$.)
By \eqref{eq:decomp0_1},
the unique 5-dimensional simple $\ah$-module $\mathsf R_4$
occurs in $\g_1$. This means that, for $e\in \ah\subset\g_0$, the subspace
$\g^e( 4) $ lies in $\g_1$. 
To get a precise description of $\g( 4)\cap\g_1$, we
use an explicit model of $\g_0$ inside $\g$. 
Let $\te$ be a common Cartan subalgebra of $\g$ and $\g_0$ and
let $\ap_1,\dots,\ap_4$ be the simple roots of $(\g,\te)$. Then $\ap_1,\ap_2,\ap_3$
are the simple roots of $\GR{C}{3}$ and $\theta=2\ap_1+4\ap_2+3\ap_3+2\ap_4$
is the simple root of $\GR{A}{1}$. (Note that $\theta$ is the highest root for 
$\g$.) The roots of $\g_1$ are those having the coefficient of $\ap_4$ equal
to $\pm 1$.

We assume that $e=e_{\ap_1}+e_{\ap_2}+e_{\ap_3}+e_{\theta}$ and
$h\in\te$ is the standard characteristic of $e\subset \co_0$, 
i.e., $\ap_i(h)=2$, 
$i=1,2,3$, and $\theta(h)=2$. Then $\ap_4(h)=-8$. Consider the $\BZ$-grading of $\g$
determined by $h$. 
Using the above values $\ap_i(h)$, one easily 
finds that $\dim(\g_1\cap \g( 4))=3$ and the corresponding roots are
\[
   \nu_1=\ap_1+3\ap_2+2\ap_3+\ap_4, \ \nu_2=2\ap_1+2\ap_2+2\ap_3+\ap_4, \  
   \nu_3=-\ap_2-\ap_3-\ap_4 .
\]
That is, the 1-dimensional subspace $\g^e( 4) $ lies in 
$\g_{\nu_1}\oplus\g_{\nu_2}\oplus\g_{\nu_3}$. Next, the list of roots of $\GR{F}{4}$ shows that
$\ad e$ takes $\g_{\nu_1}\oplus\g_{\nu_2}$ to the 1-dimensional space $\g_{\mu}$,
where $\mu=2\ap_1+3\ap_2+2\ap_3+\ap_4$. Therefore, $\g^e( 4)$  must belong to 
$\g_{\nu_1}\oplus\g_{\nu_2}$.
Since $\hot(\nu_1)=\hot(\nu_2)=7$ and $-11\le \hot(\gamma)\le 11$ for any $\gamma\in
\Delta(\GR{F}{4})$, we see that
$\htt(x)\le 3$ for all $x\in \g_{\nu_1}\oplus\g_{\nu_2}$.
Since $\htt(e)=10$, and hence $\htt(\edva)=5$,
the orbit  $G{\cdot}\edva$ cannot meet the 1-dimensional subspace $\g^e( 4)
\subset \g_{\nu_1}\oplus\g_{\nu_2} $.
\end{proof}

\end{document}